\documentclass[pdflatex,sn-nature]{sn-jnl}

\usepackage{graphicx}%
\usepackage{multirow}%
\usepackage{amsmath,amssymb,amsfonts}%
\usepackage{amsthm}%
\usepackage{mathrsfs}%
\usepackage[title]{appendix}%
\usepackage{xcolor}%
\usepackage{textcomp}%
\usepackage{manyfoot}%
\usepackage{booktabs}%
\usepackage{algorithm}%
\usepackage{algorithmicx}%
\usepackage{algpseudocode}%
\usepackage{listings}%
\newtheorem{theo}{Theorem}[section]
\newtheorem{cor}{Corollary}[theo]
\newtheorem{lem}{Lemma}[section]

\theoremstyle{definition}

\newtheorem{ex}{Example}[section]

\newtheorem{ass}{Assumption}[section]

\numberwithin{equation}{section}

\newcommand{\rb}[1]{ \left( #1 \right) }
\newcommand{\rrb}[1]{ \left\lbrace #1 \right\rbrace }
\newcommand{\curbr}[1]{ \lbrace #1 \rbrace }

\newcommand{\abs}[1]{\left|#1\right|}

\newcommand{\suml}{\sum\limits}

\newcommand{\intl}{\int\limits}
\newcommand{\ointl}{\oint\limits}
\newcommand{\liml}{\lim\limits}
\newcommand{\supl}{\sup\limits}
\newcommand{\infl}{\inf\limits}

\newcommand{\Res}[2]{\underset{#1}{\operatorname{Res}~}{#2}}
\newcommand{\LandauO}{ \mathcal{O} }
\newcommand{\bigO}[1]{ \mathcal{O}\left\lbrace #1 \right\rbrace }

\newcommand{\C}{\mathbb{C}}

\newcommand{\N}{\mathbb{N}}

\newcommand{\R}{\mathbb{R}}

\newcommand{\Z}{\mathbb{Z}}

\newcommand{\Beta}{\operatorname{B}}

\newcommand{\Epsilon}{\operatorname{E}}

\newcommand{\Ypsilon}{\operatorname{Y}}

\theoremstyle{thmstyleone}%

\theoremstyle{thmstyletwo}%

\theoremstyle{thmstylethree}%

\begin{document}

\title[Generalizing Laplace's method by means of Mellin transforms]{Generalizing Laplace's method by means of Mellin transforms}

\author*[1]{\fnm{Henrik} \sur{Kaiser}}\email{hkaiser@icloud.com}

\affil*[1]{\orgaddress{\city{Schotten}, \postcode{63679}, \state{Hessia}, \country{Germany}}}

\abstract{A well-known procedure for the asymptotic evaluation of Laplace transforms is Laplace's method. Despite its wide applicability, however, it is easy to find relevant examples where the technique is infeasible, because the integrand admits no power series expansion near the critical point, e.g., due to exponential behaviour there. We circumvent this issue through an extension of the method of Mellin transforms, based on an integral representation for the kernel and a generalization of the Taylor expansion. Our main result then extends the Laplacian method in the sense that it provides asymptotic expansions for a wider scope of Laplace transforms in terms of known special functions, rather than only in powers of the asymptotic parameter. The results are illustrated in two examples.}

\keywords{Laplace's method, Watson's lemma, Laplace transforms, Laplace-type integrals, asymptotic expansions, Mellin transforms, Mellin-Barnes integrals}

\pacs[MSC 2020 Classification]{41A60, 30E15, 44A10}

\maketitle

\section{Introduction}

For $\lambda \geq 0$, an open $\mathcal{P} \subset \R$, as well as continuous functions $\phi : \mathcal{P} \rightarrow (0, \infty)$ and $a : \mathcal{P} \rightarrow \C$, consider the generalized Laplace transform
\begin{align} \label{AsymptExpByAnalyticCont1}
\mathcal{L}_{ \phi, \mathcal{P} }\curbr{a}(\lambda) := \intl_\mathcal{P} e^{-\lambda\phi(t)} a(t) dt.
\end{align}
We concisely summarize the phase $\phi$ and the amplitude $a$ as the ingredient functions of $\mathcal{L}_{ \phi, \mathcal{P} }\curbr{a}(\lambda)$. Supposing absolute convergence of $\mathcal{L}_{ \phi, \mathcal{P} }\curbr{a}(0)$, the monotonicity of the kernel implies absolute convergence of $\mathcal{L}_{ \phi, \mathcal{P} }\curbr{a}(\lambda)$, for all $\lambda \geq 0$, and even decay, as $\lambda \rightarrow \infty$. The integral is specifically of Laplace-type, i.e., the main contribution to the rate for large $\lambda$ comes from a neighborhood of the so-called critical points, the points where the phase approaches its infimum value. The rate of decay is of exponential order, if $\inf_{t \in \mathcal{P}} \phi(t) > 0$, whereas the effect of a zero infimum substantially depends on the local behaviour of the ingredient functions near the critical points. Laplace-type integrals play an important role in mathematical and physical applications, as they constitute a widely admissible standard form, well-suited for an asymptotic ana\-lysis. The most common technique for their asymptotic evaluation is Laplace's method, with Watson's lemma as a special case and the method of steepest descent as a sophisticated extension (see \cite{bleisteinhandelsman1986, Olver1974, paris_2011}). Yet, these procedures require the existence of power series expansions, at least formally, for phase and amplitude function in a neighborhood of the critical points. On the other hand, one can easily construct examples, for which this does not hold, e.g., consider $\mathcal{L}_{ \phi_1, \mathcal{P}_1 }\curbr{a_1}(\lambda)$ with $\mathcal{P}_1 := (0, 1)$, $\phi_1(t) := -\log(1-\exp\curbr{-t^{-1}})$ and $a_1(t) := t$. In order to be able to deal with such si\-tuations, we here propose a generalization of the Laplacian method that is not confined to algebraic ingredient functions but also covers logarithmic and some kind of exponential behaviour. With $\varphi : \mathcal{P} \rightarrow (0, \infty)$ and branch of the argument $\arg\rrb{\varphi(t)} = 0$, for $t \in \mathcal{P}$, define
\begin{align}  \label{AsymptExpByAnalyticCont10}
\mathcal{M}_{\varphi, \mathcal{P} }\curbr{a}(z) := \intl_\mathcal{P} \rrb{\varphi(t)}^{-z} a(t) dt.
\end{align}
Then, being an extension of the well-known method of Mellin transforms (cf. \cite{ParKam2001}), the key idea of our approach is that the asymptotic behaviour of $\mathcal{L}_{ \phi, \mathcal{P} }\curbr{a}(\lambda)$ is connected to the ana\-lytic structure of the generating function $\mathcal{M}_{\varphi, \mathcal{P} }\curbr{a}(z)$ via an appropriate integral representation for the kernel, i.e., $\varphi$ and $\phi$ are related through an integral equation. Clearly, $\mathcal{M}_{\varphi, \mathcal{P} }\curbr{a}(z)$ constitutes a generalized Mellin transform, and its ana\-lytic structure in turn is determined by the local properties of its ingredient functions $\varphi$ and $a$, at the points where $\varphi$ is minimal. These coincide with the critical points of $\phi$, whenever $\varphi$ is a monotonic transformation of $\phi$.
\\
\hspace*{1em}To outline our strategy in a more technical way, we exemplarily consider the Cahen-Mellin integral, i.e., the inverse Mellin transform of the exponential function, according to which, for any $\lambda, x_0 > 0$ and $t \in \mathcal{P}$ with $\phi(t) > 0$, we can write
\begin{align} \label{AsymptExpByAnalyticCont12}
e^{ -\lambda \phi(t) } = \frac{1}{2\pi i} \intl_{x_0-i\infty}^{x_0+i\infty} \rrb{ \phi(t) }^{-z} \lambda^{-z} \Gamma(z) dz.
\end{align}
An application to the integral $\mathcal{L}_{\phi, \mathcal{P} }\curbr{a}(\lambda)$ and a formal interchange in the order of integration yield a representation as a so-called Mellin-Barnes integral, that is
\begin{align*}
\mathcal{L}_{\phi, \mathcal{P} }\curbr{a}(\lambda) = \frac{1}{2\pi i} \intl_{x_0-i\infty}^{x_0+i\infty} \lambda^{-z} \Gamma(z) \mathcal{M}_{\phi, \mathcal{P} }\curbr{a}(z) dz.
\end{align*}
Admissible values of $x_0 > 0$, for which this step is valid, depend on the region of absolute convergence of $\mathcal{M}_{\phi, \mathcal{P} }\curbr{a}(z)$ and thus on the behaviour of $a$ and $\phi$ at the critical points. There, $\curbr{\phi}^{-x_0}$ is maximal and possibly even unbounded. The factor $\lambda^{-z}$ in the integral has asymptotic scaling properties, because its $\lambda$-asymptotic behaviour is characterized through the magnitude of $\Re z$. In other words, $(\lambda^{-z_n})_{n\in\N_0}$ is an asymptotic sequence (or scale) for each $(z_n)_{n\in\N}\subset\C$, with $\Re z_n > \Re z_{n-1}$ (see Ch. 1, $\S$10 in \cite{Olver1974}). As a direct consequence, under absolute convergence, it is easy to see that $\mathcal{L}_{\phi, \mathcal{P} }\curbr{a}(\lambda) = \LandauO(\lambda^{-x_0})$, as $\lambda \rightarrow \infty$, i.e., the real part of the integration path indicates the asymptotic behaviour of the integral. A distinction between three cases can be made. In the first case, it exists $\chi_0>0$, such that absolute convergence of $\mathcal{M}_{\phi, \mathcal{P} }\curbr{a}(z)$ is confined to the strip $0 < \Re z < \chi_0$. The integral is then often also analytic there, and admits an analytic continuation beyond the boundary line $\Re z = \chi_0$. This analytic continuation naturally depends on the functions $\phi$ and $a$ and eventually specifies the asymptotic behaviour of $\mathcal{L}_{\phi, \mathcal{P} }\curbr{a}(\lambda)$. Below, we will see that $\mathcal{M}_{\phi, \mathcal{P} }\curbr{a}(z)$ indeed can be extended to the whole complex plane as a meromorphic function with an equidistant set of simple poles, if the critical point lies on the boundary or in the interior of a domain where $\phi$ and $a$ exhibit analyticity. Similar to the ordinary Mellin transform, it can be unlocked by means of a power series expansion. More precisely, we adopt a result that is known as Bürmann's theorem, a generalization of Taylor's theorem, to expand $\curbr{\phi'}^{-1}a$ in powers of $\phi$. The second possible case is that $\mathcal{M}_{\phi, \mathcal{P} }\curbr{a}(z)$ converges absolutely in the entire half plane $\Re z > 0$, so that $x_0>0$ can be chosen arbitrary, thereby suggesting a rate that exceeds any power of $\lambda$. Finally, there need not exist $\Re z > 0$, such that $\mathcal{M}_{\phi, \mathcal{P} }\curbr{a}(z)$ converges absolutely (e.g., recall the example $\mathcal{L}_{ \phi_1, \mathcal{P}_1 }\curbr{a_1}(\lambda)$ from before). In these circumstances, the above Mellin-Barnes representation is invalid for any $x_0>0$, and the rate of decay of $\mathcal{L}_{\phi, \mathcal{P} }\curbr{a}(\lambda)$ is presumably below any power of $\lambda$, i.e., slower than algebraic. In the present text, we will also show that the second and third situation can be circumvented by choosing appropriate integral representations for the kernel. As a result, instead of ordinary asymptotic power series (also known as Poincaré expansions), we get expansions for $\mathcal{L}_{\phi, \mathcal{P} }\curbr{a}(\lambda)$ in terms of known special functions. For properties of asymptotic expansions and generalizations of this notion, we refer to Ch. I, $\S\S$7 and 10 in \cite{Olver1974} and Ch. I, $\S\S$2 and 3 in \cite{Wong2001}.
\\
\hspace*{1em}The paper is organized as follows. After a brief discussion of prerequisites in $\S$\ref{SecPrel}, we start with a technical study of $\mathcal{M}_{\varphi, \mathcal{P} }\curbr{a}(z)$ in $\S$\ref{AnContTeixExp}. In $\S$\ref{SecDerivAsympStatI}, these results are then applied to the Laplace transform $\mathcal{L}_{\phi, \mathcal{P} }\curbr{a}(\lambda)$ for the derivation of various expansions. A few concrete examples, which actually motivated this contribution, illustrate the capabilities of our approach in $\S$\ref{SecApp}, before the article is concluded by $\S$\ref{SecConc}. Supplementary results can be found in two appendices.

\section{Preliminaries} \label{SecPrel}

We begin with a short overview on some definitions, that will be of frequent use throughout this text, and provide important background information on mapping properties of analytic functions. First of all, unless stated otherwise, the integration path $\mathcal{P}$ is supposed to have the following properties.

\begin{ass} \label{AnContTeixExpA}
The path $\mathcal{P} \subset \R$ is the open segment of the real axis between the fixed points $t_0 \in \R$ and $T \in \R \setminus \curbr{t_0} \cup \curbr{\pm\infty}$.
\end{ass}

So, by construction, $\mathcal{P}$ can be infinite and has endpoints $t_0$ and $T$, of which $t_0$ must be finite. Letting $-\pi < \arg(t-t_0) \leq \pi$ be the principal branch of the argument, we denote by
\begin{align} \label{ZeroAlgBound6}
\omega_\mathcal{P} := \liml_{ \substack{ t \rightarrow t_0 \\ t \in \mathcal{P} } } \arg(t-t_0)
\end{align}
the slope of the integration path at the point $t_0$. Clearly, $\omega_\mathcal{P} \in \rrb{0,\pi}$, since $\mathcal{P} \subset \R$. Actually, $\arg(t-t_0) = \omega_\mathcal{P}$, for all $t \in \mathcal{P}$. For $f : \mathcal{P} \rightarrow \C$ and $\tau \in \curbr{t_0, T}$, we moreover write
\begin{align} \label{DerivAsympStatILAMBDA}
f(\tau) := \liml_{ \substack{t \rightarrow \tau \\ t \in \mathcal{P} } } f(t).
\end{align}
The $k$-th derivative of $f$ is indicated by $f^{(k)}$, for $k \in \N$, or by $k$ primes, for small values of $k$. Following the usual notation, $D_R(t_0) := \curbr{ t \in \C : |t-t_0| < R }$ stands for the open disc of radius $R > 0$ around $t_0$ and $\overline{D}_R(t_0) := \curbr{ t \in \C : |t-t_0| \leq R }$ for its closure. The associated positively (i.e., counterclockwisely) oriented boundary curve is $\Gamma_R(t_0) := \curbr{ t \in \C : |t-t_0| = R }$, and $\mathcal{P}_R := \mathcal{P} \cap D_R(t_0)$ refers to the (finite open) subsegment of $\mathcal{P}$ that is contained in $D_R(t_0)$.
\\
\hspace*{1em}It is well-known that analyticity of a function $\theta : D_R(t_0) \rightarrow \C$, with fixed $R > 0$, is equivalent to uniform convergence in $D_{r_1}(t_0)$ of the Taylor series $\theta(t) = \sum_{k=0}^\infty (k!)^{-1} \theta^{(k)}(t_0) (t-t_0)^k$, for each $0 < r_1 < R$. The point $t_0$ is a zero of $\theta(t)$ of order (or multiplicity) $K\in \N$, if $\theta^{(k)}(t_0) = 0$, for all $0 \leq k \leq K-1$, and $\theta^{(K)}(t_0)\neq0$. If $K \in \N\setminus\curbr{1}$, $\theta^{(k)}(t_0) = 0$, for each $1 \leq k \leq K-1$, and $\theta^{(K)}(t_0)\neq0$, then $t_0$ is called a saddle point of order $K-1$. In these circumstances, the function amplifies by $K$-times the angle between any two curves intersecting at $t = t_0$. Equivalently, a small neighborhood of $t = t_0$ is mapped to a Riemann surface with $K$ sheets. On each sheet, $\theta(t)$ attains each value once, and a path of steepest descent emanates from $t_0$, i.e., a line along which $\Im \theta$ is constant. The principle behind the method of steepest descent is that the asymptotic evaluation of a generalized Laplace transform with phase function $\theta(t)$ essentially simplifies, if a deformation of its integration path to a path of steepest descent is applicable (see \cite{Olver1974} or \cite{paris_2011}).
\\
\hspace*{1em}Of special importance is the case that $\theta'(t_0) \neq 0$, in which $\theta : D_R(t_0) \rightarrow \C$ is angle-preserving, called (locally) conformal. Then, according to the inverse function theorem (cf. Theorem 5.7.13 in \cite{Asmar2018}), it exists $0 < r_2 < R$, such that $\theta$ is one-to-one on the disc $D_{r_2}(t_0)$. The radius of the greatest admissible disc naturally depends on the properties of $\theta$. Landau's estimate (see Exercise 35 in \cite{Asmar2018}) asserts that it suffices to choose $r_2 := (4M)^{-1} R^2 |\theta'(t_0)|$, with $M := \max_{z \in \Gamma_R(t_0)} |\theta(t)|$. Furthermore, $\theta$ then even has an analytic local inverse. To be more precise, in a small neighborhood of $w_0 := \theta(t_0)$, the function $\theta^{-1} : \theta(D_{r_2}(t_0)) \rightarrow D_{r_2}(t_0)$ can be expanded in powers of $\theta(t)-w_0$. This representation is known as the Lagrange inversion formula (compare Theorem 5.7.17 and Exercise 32 in \cite{Asmar2018} or $\S$6.23 in \cite{copson1970intro}). With that in mind, it becomes obvious that any function $f(t)$ that is analytic in a (possibly punctured) neighborhood of $t=t_0$ admits an expansion in powers of an arbitrary conformal function $\theta(t)$, where $\theta(t) \equiv t$ corresponds to the well-known Taylor or Laurent expansion. Informally speaking, it exist $(\operatorname{c}(k))_{k \in \Z} \subset \C$ and $r_3 > 0$, for which
\begin{align*}
f(t) = \suml_{k = -\infty}^\infty \curbr{\theta(t)-\theta(t_0)}^k \operatorname{c}(k) \hspace{1cm} (t \in D_{r_3}(t_0)).
\end{align*}
Bürmann (cf. \cite{Buermann1799} or $\S$7.3 in \cite{whittakerwatson1952}) first established a finite version of this expansion for analytic functions. A later extension due to Teixeira (see \cite{Teixeira1900} or $\S$7.31 in \cite{whittakerwatson1952}) also covers functions with an isolated singularity at $t=t_0$ and even includes a sufficient condition for convergence.

\section{Analytic continuation of a generalized Mellin transform with analytic ingredients} \label{AnContTeixExp}

We begin with a study of the generating function $\mathcal{M}_{\varphi, \mathcal{P} }\curbr{a}(z)$. Since $\mathcal{M}_{\varphi, \mathcal{P} \setminus \mathcal{P}_r }\curbr{a}(z)$ is entire, for any path $\mathcal{P}$ as in Assumption \ref{AnContTeixExpA} but with finite $T \in \R$, continuous functions $\varphi : \mathcal{P} \cup \curbr{T} \rightarrow (0, \infty)$, $a : \mathcal{P} \cup \curbr{T} \rightarrow \C$ and $r>0$, it is clear that the region of analyticity of $\mathcal{M}_{\varphi, \mathcal{P} }\curbr{a}(z)$ is merely determined by the behaviour of the ingredient functions near $t=t_0$. For that reason, it absolutely suffices to confine to a study of $\mathcal{M}_{\varphi, \mathcal{P}_r }\curbr{a}(z)$, where an appropriate radius $r>0$ is essentially characterized by the following assumption.

\begin{ass} \label{AssIngGenFct2}
It exists $R>0$, with $\varphi(t) > 0$, for $t \in \mathcal{P}_R$. In addition, it exist $\alpha_0 \in \C$ with $\Re \alpha_0 > -1$, $\beta_0 > 0$ and analytic functions $q, p : D_R(t_0) \rightarrow \C$, where $p(t_0) \neq 0$, such that, for $t \in \mathcal{P}_R \cup \curbr{t_0}$, we have the factorizations
\begin{align*}
a(t) &= (t-t_0)^{\alpha_0} q(t), \\
\varphi(t) &= (t-t_0)^{\beta_0} p(t).
\end{align*}
In this, fractional powers $\gamma \in \curbr{\alpha_0, \beta_0}$ are obtained from (\ref{ZeroAlgBound6}) via the relation $(t-t_0)^\gamma \sim |t-t_0|^\gamma e^{i\gamma \omega_\mathcal{P}}$, as $t \rightarrow t_0$ with $t \in \mathcal{P}_R \cup \curbr{t_0}$, and elsewhere on $\mathcal{P}_R$ by continuity.
\end{ass}

Obviously, by construction, $a(t)$ and $\varphi(t)$ are also analytic in a region $D \subset \C$, however, not necessarily at $t=t_0$. In fact, for fractional $\alpha_0$ or $\beta_0$, it is an algebraic branch point of the respective function and must hence lie on the boundary of $D$. Conversely, for $\alpha_0 \in \N_0$ and $\beta_0 \in \N$, we infer that $D_R(t_0) \subseteq D$. Now, denoting
\begin{align} \label{ZeroAlgBound2c}
\chi_n := \frac{\alpha_0+1}{\beta_0} + \frac{n}{\beta_0} \hspace{1cm} (n\in\N_0),
\end{align}
as a consequence of Assumptions \ref{AnContTeixExpA}, \ref{AssIngGenFct2} and Lemma \ref{LemAnGenFctAlgKer}, for each $0 < r < R$, the integral definition of the generating function $\mathcal{M}_{\varphi, \mathcal{P}_r }\curbr{a}(z)$ (see (\ref{AsymptExpByAnalyticCont10})) converges absolutely and is holomorphic in the half plane $\Re z < \Re\chi_0$. The traditional method for analytically continuing an ordinary Mellin transform relies on a (formal) power series expansion for the amplitude (cf. $\S$4.3 in \cite{bleisteinhandelsman1986}). In an analogous fashion, with regard to $\mathcal{M}_{\varphi, \mathcal{P}_r }\curbr{a}(z)$, we aim to expand
\begin{align} \label{ZeroAlgBound0}
a_\varphi(t) := \frac{a(t)}{\varphi'(t)}
\end{align}
in powers of $\varphi(t)$, which will eventually facilitate a partial evaluation of the integral by means of the fundamental theorem of calculus. Yet, since $a_\varphi(t)$ need not be analytic at $t=t_0$, we can not expect the existence of an expansion in a whole neighborhood of this point, so that we rather confine to an expansion that is valid merely for $t \in \mathcal{P}_r$, with small $0 < r < R$. It will be obtained after proper factorization of $a_\varphi(t)$ into a fractional and an analytic part. First, however, notice that $\varphi(t)$ actually is conformal at $t=t_0$, merely if $\beta_0=1$. For that reason, we construct an auxiliary conformal function whose $\beta_0$-th power coincides with $\varphi(t)$, for $t \in \mathcal{P}_r$. We then expand $a_\varphi(t)$ in powers of this function, which corresponds to an expansion in powers of $\curbr{\varphi(t)}^\frac{1}{\beta_0}$.
\\
\hspace*{1em}When working with powers of analytic functions, it is always important to choose the correct branches of the arguments. Here, the situation is very convenient, because $\varphi(t)>0$, for $t \in \mathcal{P}_R$, so that each branch of the argument of $\varphi(t)$ and of $p(t)$ is constant along $\mathcal{P}_R$. Since the kernel of $\mathcal{M}_{\varphi, \mathcal{P} }\curbr{a}(z)$ was defined in terms of the principal branch of the argument of $\varphi(t)$, it particularly follows that $\arg\curbr{\varphi(t)}=0$, for $t \in \mathcal{P}_R$, and $\arg\curbr{\varphi(t_0)}=0$, by continuity. From this and Assumption \ref{AssIngGenFct2}, we infer that
\begin{align} \label{ZeroAlgBound7}
\arg\curbr{p(t)} = -\beta_0\omega_\mathcal{P} \hspace{1cm} (t \in \mathcal{P}_R \cup \curbr{t_0}).
\end{align}
Based on this branch, we construct any fractional powers of $p(t)$, i.e., for $\kappa \in \C$, we agree that
\begin{align*}
\curbr{p(t)}^\kappa = |p(t)|^\kappa e^{-i\kappa\beta_0\omega_\mathcal{P}} \hspace{1cm} (t \in \mathcal{P}_R \cup \curbr{t_0}).
\end{align*}
By assumption, the function $\curbr{p(t)}^\kappa$ is even analytic in a neighborhood of $\mathcal{P}_R \cup \curbr{t_0}$, since $p(t_0) \neq 0$ and because $\varphi(t) > 0$  implies that $p(t) \neq 0$, for $t \in \mathcal{P}_R$. Let $D_{r_p}(t_0)$ be the greatest disc of radius $0 < r_p < R$ around $t_0$, in which this power function remains analytic. Such a disc exists, due to the fact that zeros and singularities of analytic functions lie isolated. Indeed, $r_p$ is determined by the closest zero or singularity of $p(t)$ (cf. Lemma 3.6.11 in \cite{wegert2012visual}). Finally, also
\begin{align} \label{ZeroAlgBound9}
\varphi_\mathcal{P}(t) := (t-t_0) \rrb{ p(t) }^\frac{1}{\beta_0}
\end{align}
is analytic in $D_{r_p}(t_0)$. Since $\varphi_\mathcal{P}(t_0)=0$ and $\varphi_\mathcal{P}'(t_0) \neq 0$, the function is even conformal at $t=t_0$. As a consequence, it exists $0 < r_\varphi < R$, such that $\varphi_\mathcal{P}(t_1) = \varphi_\mathcal{P}(t_2)$, for $t_1, t_2 \in D_{r_\varphi}(t_0)$, if and only if $t_1=t_2$. Moreover, we observe that $|\varphi_\mathcal{P}(t)|^{\beta_0} = \varphi(t)$, for any $t \in \mathcal{P}_R$, as well as that
\begin{align} \label{ZeroAlgBound10}
\beta_0\arg\rrb{\varphi_\mathcal{P}(t_0) } = \beta_0\omega_\mathcal{P} + \arg\curbr{p(t_0)} = 0.
\end{align}
Therefore, by continuity, we conclude that $\curbr{\varphi_\mathcal{P}(t)}^{\beta_0} = \varphi(t)$ and conversely that
\begin{align} \label{ZeroAlgBound12A}
\varphi_\mathcal{P}(t) = \rrb{\varphi(t)}^\frac{1}{\beta_0} > 0 \hspace{1cm} (t \in \mathcal{P}_R).
\end{align}
Clearly, $\varphi_\mathcal{P}(t)$ constitutes a $\beta_0$-th root of $\varphi(t)$ at $t=t_0$, if $\beta_0 \in \N\setminus\curbr{1}$. We can now proceed with the indicated decomposition of $a_\varphi(t)$. Due to Assumption \ref{AssIngGenFct2}, in terms of
\begin{align} \label{2026052201}
b(t) := \beta_0 p(t) + (t-t_0)p'(t),
\end{align}
we have $\varphi'(t) = (t-t_0)^{\beta_0-1} b(t)$. Furthermore, because $b(t_0) \neq 0$, we can find $0 < r_b < R$, with $b(t)\neq 0$, for all $t \in D_{r_b}(t_0)$. In these circumstances, $\curbr{ b(t) }^{-1} q(t)$ is holomorphic in the disc $D_{r_b}(t_0)$ and by definition (\ref{ZeroAlgBound0}), we can write
\begin{align*}
a_\varphi(t) = (t-t_0)^{\alpha_0+1-\beta_0} \frac{q(t)}{b(t)} \hspace{1cm} (t \in \mathcal{P}_{r_b}).
\end{align*}
We also introduce the function
\begin{align} \label{2026052202}
A_\mathcal{P}(t) := \curbr{ p(t) }^{1-\chi_0} \frac{q(t)}{b(t)}.
\end{align}
It is analytic in the disc $D_{r_A}(t_0)$, for $r_A := \min\curbr{r_p, r_b}$. By construction of $\varphi_\mathcal{P}(t)$ in (\ref{ZeroAlgBound9}), it moreover fulfills
\begin{align*}
A_\mathcal{P}(t) = \rrb{ \frac{ t-t_0 }{ \varphi_\mathcal{P}(t) } }^{\alpha_0+1-\beta_0} \frac{q(t)}{b(t)} \hspace{1cm} (t \in \mathcal{P}_{r_A}).
\end{align*}
Accordingly, in terms of this function, we eventually arrive at
\begin{align} \label{2026052203}
a_\varphi(t) = \rrb{ \varphi_\mathcal{P}(t) }^{\beta_0(\chi_0-1)} A_\mathcal{P}(t) \hspace{1cm} (t \in \mathcal{P}_{r_A}).
\end{align}
On the one hand, the first factor is analytic in a region of which $t_0$ is a boundary point. On the other hand, due to its analyticity, the function $A_\mathcal{P}(t)$ admits an expansion in powers of $\varphi_\mathcal{P}(t)$, from what we will altogether obtain our expansion for $a_\varphi(t)$. More precisely, for $N \in \N$ and $0 < r_0 < \min\curbr{r_A, r_\varphi}$, defining
\begin{align} \label{ZeroAlgBound14XY}
\operatorname{c}_\mathcal{P}(n) := \frac{1}{\beta_0} \frac{1}{2\pi i} \ointl_{\Gamma_{r_0}(t_0)} \frac{ q(w) }{ \curbr{p(w)}^\frac{\alpha_0}{\beta_0} } \frac{ dw }{ \rrb{\varphi_\mathcal{P}(w)}^{n+1} } \hspace{1cm} (0 \leq n \leq N-1),
\end{align}
as well as
\begin{align} \label{ZeroAlgBound14d}
\operatorname{C}_\mathcal{P}(t, N) := \rrb{ \varphi_\mathcal{P}(t) }^N \frac{1}{\beta_0} \frac{1}{2\pi i} \ointl_{\Gamma_{r_0}(t_0)} \frac{q(w) }{\curbr{p(w)}^\frac{\alpha_0}{\beta_0} \rrb{ \varphi_\mathcal{P}(w)}^N } \frac{dw}{ \varphi_\mathcal{P}(w) - \varphi_\mathcal{P}(t) },
\end{align}
the following expansion holds.

\begin{lem} \label{LemZeroAlgBound15}
Under Assumptions \ref{AnContTeixExpA} and \ref{AssIngGenFct2}, for all $(t, N) \in \mathcal{P}_{r_0} \times \N_0$, we have
\begin{align*}
a_\varphi(t) = \suml_{n= 0}^{N-1} \rrb{\varphi_\mathcal{P}(t)}^{\beta_0(\chi_n-1)} \operatorname{c}_\mathcal{P}(n) + \rrb{\varphi_\mathcal{P}(t)}^{\beta_0(\chi_0-1)} \operatorname{C}_\mathcal{P}(t, N).
\end{align*}
If $|\varphi_\mathcal{P}(t)| < \min_{w \in \Gamma_{r_0}(t_0)} |\varphi_\mathcal{P}(w)|$, then $\operatorname{C}_\mathcal{P}(t, N) = o(1)$, as $N \rightarrow \infty$.
\end{lem}

We emphasize that the exact region of convergence of the sequence of partial sums substantially depends on the mapping properties of $\varphi_\mathcal{P}(t)$. However, in any case, the required condition is fulfilled at least on a small subsegment of $\mathcal{P}_{r_0}$, close to $t=t_0$.

\begin{proof}[Proof of Lemma \ref{LemZeroAlgBound15}]
Notice that the closed disc $\overline {D}_{r_0}(t_0)$ is included in the region of analyticity of $A_\mathcal{P}(t)$. Therefore, by Cauchy's theorem, for any $t \in D_{r_0}(t_0)$, we have
\begin{align*}
A_\mathcal{P}(t) = \frac{1}{2\pi i} \ointl_{\Gamma_{r_0}(t_0)} \frac{A_\mathcal{P}(w)}{w-t} dw.
\end{align*}
In addition, recall that $\varphi_\mathcal{P}(t)$ is analytic and one-to-one in $D_{r_\varphi}(t_0)$, and observe that $\overline{D}_{r_0}(t_0) \subset D_{r_\varphi}(t_0)$. Hence, for each $t \in D_{r_0}(t_0)$, even $(\varphi_\mathcal{P}(w)-\varphi_\mathcal{P}(t))^{-1}\varphi_\mathcal{P}'(w)$ is $w$-analytic there, except at $w=t$, where it expands to $(w-t)^{-1} + \LandauO(1)$. Consequently, for any $t \in D_{r_0}(t_0)$, we deduce that
\begin{align*}
A_\mathcal{P}(t) &= \frac{1}{2\pi i} \ointl_{\Gamma_{r_0}(t_0)} \frac{A_\mathcal{P}(w)\varphi_\mathcal{P}'(w)}{\varphi_\mathcal{P}(w)-\varphi_\mathcal{P}(t)} dw + \frac{1}{2\pi i} \ointl_{\Gamma_{r_0}(t_0)} A_\mathcal{P}(w) \rrb{ \frac{1}{w-t} - \frac{\varphi_\mathcal{P}'(w)}{\varphi_\mathcal{P}(w)-\varphi_\mathcal{P}(t)} } dw \\
&= \frac{1}{2\pi i} \ointl_{\Gamma_{r_0}(t_0)} \frac{A_\mathcal{P}(w)\varphi_\mathcal{P}'(w)}{\varphi_\mathcal{P}(w)-\varphi_\mathcal{P}(t)} dw.
\end{align*}
In particular, $\varphi_\mathcal{P}(w) \neq \varphi_\mathcal{P}(t)$ and $\varphi_\mathcal{P}(w) \neq 0$, for $(w, t) \in \Gamma_{r_0}(t_0) \times D_{r_0}(t_0)$. With the aid of the formula for geometric sums, for $(w, t, N) \in \Gamma_{r_0}(t_0) \times D_{r_0}(t_0) \times \N$, it is thus possible to write
\begin{align*}
\frac{1}{ \varphi_\mathcal{P}(w) - \varphi_\mathcal{P}(t) } = \suml_{n=0}^{N-1} \frac{ \rrb{ \varphi_\mathcal{P}(t) }^n }{ \rrb{\varphi_\mathcal{P}(w)}^{n+1} } + \frac{ \rrb{ \varphi_\mathcal{P}(t) }^N }{ \rrb{\varphi_\mathcal{P}(w)}^N } \frac{ 1 }{ \varphi_\mathcal{P}(w)-\varphi_\mathcal{P}(t) }.
\end{align*}
Furthermore, according to (\ref{ZeroAlgBound9}) and (\ref{2026052201}), we compute
\begin{align*}
\varphi_\mathcal{P}'(w) = \frac{1}{\beta_0} \curbr{p(w)}^{\frac{1}{\beta_0}-1} b(w) \hspace{1cm} (w \in D_{r_A}(t_0)).
\end{align*}
Thus, by definition of $A_\mathcal{P}(w)$ in (\ref{2026052202}), we get
\begin{align*}
A_\mathcal{P}(w) \varphi_\mathcal{P}'(w) = \frac{1}{\beta_0} \frac{q(w)}{\rrb{p(w)}^\frac{\alpha_0}{\beta_0}} \hspace{1cm} (w \in D_{r_A}(t_0)).
\end{align*}
Upon combining these findings with the above expansion and the representation (\ref{2026052203}) for $a_\varphi(t)$, as well as definition (\ref{ZeroAlgBound2c}), we eventually arrive at the asserted expansion, with the coefficients and the remainder integral as given in (\ref{ZeroAlgBound14XY}) and (\ref{ZeroAlgBound14d}), respectively. With regard to the latter, denoted by $\operatorname{C}_\mathcal{P}(t, N)$, since $\overline{D}_{r_0}(t_0)$ is included in a domain where $\varphi_\mathcal{P}(w)$ is one-to-one, it holds that $\min_{w \in \Gamma_{r_0}(t_0)} |\varphi_\mathcal{P}(w)-\varphi_\mathcal{P}(t)| > 0$, for each $t \in D_{r_0}(t_0)$. Thus, if $|\varphi_\mathcal{P}(t)| < \min_{w \in \Gamma_{r_0}(t_0)} |\varphi_\mathcal{P}(w)|$, it becomes obvious from (\ref{ZeroAlgBound14d}) that $\operatorname{C}_\mathcal{P}(t, N) = o(1)$, as $N \rightarrow \infty$, which completes the proof.
\end{proof}

For actually computing the coefficients (\ref{ZeroAlgBound14XY}), it may be helpful to recast them in terms of derivatives of the involved functions. To be more precise, from Cauchy's theorem, we deduce that
\begin{align*}
\operatorname{c}_\mathcal{P}(n) = \frac{1}{\beta_0} \frac{1}{n!} \frac{d^n}{dw^n} \frac{ q(w) (w-t_0)^{n+1} }{ \curbr{p(w)}^\frac{\alpha_0}{\beta_0} \rrb{\varphi_\mathcal{P}(w)}^{n+1} } \Bigg|_{w=t_0} \hspace{1cm} (0\leq n \leq N-1).
\end{align*}
Thereof, according to the definition of $\chi_n$ and $\varphi_\mathcal{P}(t)$ in (\ref{ZeroAlgBound2c}) and (\ref{ZeroAlgBound9}), we get
\begin{align} \label{ZeroAlgBound21b}
\operatorname{c}_\mathcal{P}(n) = \frac{1}{\beta_0} \frac{1}{n!} \frac{d^n}{dw^n} \frac{ q(w) }{ \curbr{ p(w) }^{\chi_n} } \Bigg|_{w=t_0}.
\end{align}
Specifically $\operatorname{c}_\mathcal{P}(0) = \beta_0^{-1} \curbr{p(t_0)}^{-\chi_0} q(t_0)$. By means of the expansion from Lemma \ref{LemZeroAlgBound15}, we can finally next determine the analytic continuation of the generating function $\mathcal{M}_{\varphi, \mathcal{P}_r }\curbr{a}(z)$, whenever $0 < r < r_0$. It is represented by a decomposition into a singular and an analytic part, where the singularities are expanded in increasing order with respect to the real part. We denote the associated remainder term by
\begin{align} \label{ApplicOfExp3}
\mathcal{M}_{\varphi, \mathcal{P}_r}^N\curbr{a}(z) := \intl_{\mathcal{P}_r} \rrb{\varphi(t)}^{-z} \rrb{\varphi_\mathcal{P}(t)}^{\beta_0(\chi_0-1)} \operatorname{C}_\mathcal{P}(t, N) \varphi'(t) dt,
\end{align}
with $\operatorname{C}_\mathcal{P}(t, N)$ referring to the integral from (\ref{ZeroAlgBound14d}).

\begin{theo} \label{TheoAnContAnIng}
Under Assumptions \ref{AnContTeixExpA} and \ref{AssIngGenFct2}, for each $0 < r < r_0$ and $N\in\N$, the integral definition of the generating function $\mathcal{M}_{\varphi, \mathcal{P}_r }\curbr{a}(z)$ can be continued to a meromorphic function in the half plane $\Re z < \Re\chi_N$, through the expansion
\begin{align} \label{ApplicOfExp4}
\mathcal{M}_{\varphi, \mathcal{P}_r }\curbr{a}(z) = - e^{i\omega_\mathcal{P}} \suml_{n=0}^{N-1} \operatorname{c}_\mathcal{P}(n) \frac{\rrb{\varphi(r)}^{\chi_n-z} }{z-\chi_n} + \mathcal{M}_{\varphi, \mathcal{P}_r}^N\curbr{a}(z),
\end{align}
whose coefficients can be found in (\ref{ZeroAlgBound14XY}) or (\ref{ZeroAlgBound21b}), while the remainder term is available in (\ref{ApplicOfExp3}). In the indicated region, the only possible singularities are the sequence of simple poles $( \chi_n )_{0 \leq n \leq N-1}$, with residues
\begin{align} \label{ApplicOfExp5}
\Res{ z= \chi_n }{ \mathcal{M}_{\varphi, \mathcal{P}_r }\curbr{a}(z) } = - e^{i\omega_\mathcal{P}} \operatorname{c}_\mathcal{P}(n) \hspace{1cm} (0 \leq n \leq N-1).
\end{align}
Lastly, $\mathcal{M}_{\varphi, \mathcal{P}_r }\curbr{a}(z) = \LandauO\curbr{ |z|^{-1} }$, as $\Im z \rightarrow\pm\infty$, in the half plane $\Re z < \Re\chi_N$.
\end{theo}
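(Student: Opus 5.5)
The plan is to start from the integral definition in the half plane $\Re z < \Re\chi_0$, where absolute convergence and holomorphy are already known. There we write $a(t) = a_\varphi(t)\varphi'(t)$ and insert the expansion of Lemma \ref{LemZeroAlgBound15}; since $r < r_0$, it is valid for every $t \in \mathcal{P}_r$. This gives
\begin{align*}
\mathcal{M}_{\varphi, \mathcal{P}_r }\curbr{a}(z) = \suml_{n=0}^{N-1} \operatorname{c}_\mathcal{P}(n) \intl_{\mathcal{P}_r} \rrb{\varphi(t)}^{-z} \rrb{\varphi_\mathcal{P}(t)}^{\beta_0(\chi_n-1)} \varphi'(t) dt + \mathcal{M}_{\varphi, \mathcal{P}_r}^N\curbr{a}(z).
\end{align*}
By (\ref{ZeroAlgBound12A}) we have $\curbr{\varphi_\mathcal{P}(t)}^{\beta_0(\chi_n-1)} = \curbr{\varphi(t)}^{\chi_n-1}$ on $\mathcal{P}_R$. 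Each summand is therefore $\int_{\mathcal{P}_r} \curbr{\varphi}^{\chi_n-1-z}\varphi' dt$, which the fundamental theorem of calculus evaluates explicitly. Here $\varphi(t_0)=0$, $\varphi>0$, and $\Re(\chi_n - z) > 0$. Recall that $\mathcal{P}_r$ runs from $t_0$ to $t_0 + re^{i\omega_\mathcal{P}}$, and that by (\ref{DerivAsympStatILAMBDA}) $\varphi(r)$ should denote the value at that endpoint. The orientation $dt$ along $\mathcal{P}$ then produces the sign and phase factor $-e^{i\omega_\mathcal{P}}$ (direction $\pm 1$) in front of $\curbr{\varphi(r)}^{\chi_n-z}/(z-\chi_n)$. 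I will simply track this bookkeeping carefully against the paper's convention.

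Next I analyse the remainder (\ref{ApplicOfExp3}). By Lemma \ref{LemZeroAlgBound15}, $\operatorname{C}_\mathcal{P}(t,N) = \curbr{\varphi_\mathcal{P}(t)}^N \cdot G_N(t)$. The function $G_N$ is the contour integral in (\ref{ZeroAlgBound14d}), and it is bounded uniformly for $t \in \mathcal{P}_r$. This holds because $r<r_0$ keeps $t$ away from $\Gamma_{r_0}(t_0)$, and injectivity of $\varphi_\mathcal{P}$ on $\overline D_{r_0}(t_0)$ gives $\min_{w,t}|\varphi_\mathcal{P}(w)-\varphi_\mathcal{P}(t)| >0$ by compactness. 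Hence the integrand of the remainder is bounded by a constant times $\curbr{\varphi(t)}^{\chi_N - 1 - \Re z}\,|\varphi'(t)|$, where I use $\beta_0(\chi_0-1)+N = \beta_0(\chi_N-1)$. Using $\varphi(t)\asymp |t-t_0|^{\beta_0}$ and $\varphi'(t) \asymp |t-t_0|^{\beta_0-1}$ from Assumption \ref{AssIngGenFct2} with $b(t_0)\ne0$, this is $\LandauO(|t-t_0|^{\beta_0(\chi_N-\Re z)-1})$, which is integrable at $t_0$ precisely when $\Re z < \Re\chi_N$. Locally uniform convergence and Morera's theorem (or differentiation under the integral) then make $\mathcal{M}^N_{\varphi,\mathcal{P}_r}\curbr{a}$ holomorphic in $\Re z<\Re\chi_N$. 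Its right-hand side therefore provides the meromorphic continuation, by the identity theorem, since it agrees with the original on $\Re z<\Re\chi_0$.

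The poles and residues (\ref{ApplicOfExp5}) are then read off directly: the only singular terms are $\curbr{\varphi(r)}^{\chi_n - z}/(z-\chi_n)$, whose residue at $z = \chi_n$ is $1$, because $\curbr{\varphi(r)}^0=1$. Possible coincidences of the $\chi_n$ do not occur, since they are distinct. Some $\operatorname{c}_\mathcal{P}(n)$ may vanish, which is why the statement says "possible" singularities.

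The main obstacle is the final growth bound $\LandauO(|z|^{-1})$ as $\Im z\to\pm\infty$. The explicit terms satisfy it trivially, since $|\curbr{\varphi(r)}^{\chi_n - z}|$ depends only on $\Re z$. For the remainder I will integrate by parts once. Writing $\curbr{\varphi}^{\chi_N-1-z}\varphi'\,dt = d\curbr{\varphi}^{\chi_N-z}/(\chi_N - z)$ and treating $G_N(t)$ as the smooth factor, I get a boundary term at $t=r$ of size $\LandauO(|z|^{-1})$. The boundary term at $t_0$ vanishes for $\Re z<\Re\chi_N$. The remaining term is $(\chi_N-z)^{-1}\int \curbr{\varphi}^{\chi_N-z} \tfrac{d}{dt}G_N(t)\,dt$, and it is absolutely convergent and bounded in $\Im z$ provided $G_N'$ is bounded on $\mathcal{P}_r$. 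That boundedness follows by differentiating under the contour integral, with the same compactness argument. The bound holds locally uniformly in $\Re z$, i.e. on closed substrips, which is what I will state precisely.
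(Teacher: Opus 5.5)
Your proposal is correct and follows essentially the same route as the paper. You expand $a_\varphi$ via Lemma \ref{LemZeroAlgBound15}, evaluate the $N$ explicit terms by the fundamental theorem of calculus using (\ref{ZeroAlgBound12A}), control the remainder through the bounded analytic contour integral (your $G_N$ is the paper's $\beta_0^{-1}F_N$) to get holomorphy in $\Re z<\Re\chi_N$, and obtain the $\LandauO\curbr{|z|^{-1}}$ bound by one integration by parts with a bounded $G_N'$. The only slip is cosmetic: because $\alpha_0$, and hence $\chi_N$, may be complex, the exponents in your remainder bounds should read $\Re\chi_N-1-\Re z$ and $\beta_0(\Re\chi_N-\Re z)-1$.
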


Notice that the residue is independent from the radius $r$. Also, poles may turn out as removable singularities, whenever $\operatorname{c}_\mathcal{P}(n)=0$, for some $0 \leq n \leq N-1$.

\begin{proof}[Proof of Theorem \ref{TheoAnContAnIng}]
Throughout the proof, $0 < r < r_0$ and $N\in \N_0$ are fixed but arbitrary. Now, recall first that the integral definition of $\mathcal{M}_{\varphi, \mathcal{P}_r }\curbr{a}(z)$ converges absolutely and is analytic in $\Re z < \Re\chi_0$, compare (\ref{AsymptExpByAnalyticCont10}). With the aid of (\ref{ZeroAlgBound0}), it becomes
\begin{align*}
\mathcal{M}_{\varphi, \mathcal{P}_r }\curbr{a}(z) = \intl_{\mathcal{P}_r} \rrb{\varphi(t)}^{-z} a_\varphi(t) \varphi'(t) dt.
\end{align*}
By construction, $\overline{D}_r(t_0) \subset D_{r_0}(t_0)$, which implies that $\mathcal{P}_r \subset \mathcal{P}_{r_0}$. We may hence expand $a_\varphi(t)$ in the fashion of Lemma \ref{LemZeroAlgBound15}. Thereof, in terms of the integral (\ref{ApplicOfExp3}), for $\Re z < \Re\chi_0$, we get
\begin{align*}
\mathcal{M}_{\varphi, \mathcal{P}_r }\curbr{a}(z) = \suml_{n=0}^{N-1} \operatorname{c}_\mathcal{P}(n) \intl_{\mathcal{P}_r} \rrb{\varphi(t)}^{-z} \rrb{\varphi_\mathcal{P}(t)}^{\beta_0(\chi_n-1)} \varphi'(t) dt + \mathcal{M}_{\varphi, \mathcal{P}_r}^N\curbr{a}(z).
\end{align*}
By Lemma \ref{LemAnGenFctAlgKer} and since $\Re\chi_n \geq \Re\chi_0$, for each $0 \leq n \leq N-1$, the integral in the $n$-th summand converges absolutely. In view of the fundamental theorem of calculus and the identity (\ref{ZeroAlgBound12A}), it simplifies to
\begin{align*}
\intl_{\mathcal{P}_r} \rrb{\varphi(t)}^{-z} \rrb{\varphi_\mathcal{P}(t)}^{\beta_0(\chi_n-1)} \varphi'(t) dt = \intl_{\mathcal{P}_r} \rrb{\varphi(t)}^{\chi_n -z-1} \varphi'(t) dt = e^{i\omega_\mathcal{P}} \frac{\rrb{\varphi(r)}^{\chi_n-z} }{\chi_n-z}.
\end{align*}
It shows that the integral on the left hand side can be extended to the whole complex plane as a meromorphic function, whose only singularity is a simple pole at $z=\chi_n$. Regarding the remainder integral, for convenience, we define
\begin{align*}
F_N(t) := \frac{1}{2\pi i} \ointl_{ \Gamma_{r_0}(t_0) } \frac{ q(w) }{ \curbr{p(w)}^\frac{\alpha_0}{\beta_0} \rrb{\varphi_\mathcal{P}(w)}^N } \frac{ dw }{ \varphi_\mathcal{P}(w) - \varphi_\mathcal{P}(t) }.
\end{align*}
Then, according to (\ref{ZeroAlgBound2c}), (\ref{ZeroAlgBound12A}), (\ref{ZeroAlgBound14d}) and (\ref{ApplicOfExp3}), we can write
\begin{align*}
\mathcal{M}_{ \varphi, \mathcal{P}_r }^N\curbr{a}(z) = \frac{1}{\beta_0} \intl_{ \mathcal{P}_r } \rrb{\varphi(t)}^{\chi_N-z-1} \varphi'(t) F_N(t) dt.
\end{align*}
Since $\varphi_\mathcal{P}(t)$ is analytic and one-to-one in $D_{r_\varphi}(t_0)$ and $\overline{D}_{r_0}(t_0) \subset D_{r_\varphi}(t_0)$, the integral $F_N(t)$ establishes an analytic function in $D_{r_0}(t_0)$, whose derivatives can be obtained from differentiation under the integral sign (see Theorem 5.6.1 in \cite{wegert2012visual}). At the same time, recall that $\mathcal{P}_r$ runs in the interior of $D_{r_0}(t_0)$. We hence conclude that $F_N(t)$ is infinitely many times continuously differentiable on $\mathcal{P}_r$, with $\sup_{t \in \mathcal{P}_r} |F_N^{(k)}(t)| = \LandauO(1)$, for each $k \in \N_0$. Moreover, $\sup_{t \in \mathcal{P}_r} |t-t_0|^{\beta_0}\rrb{\varphi(t)}^{-1} = \LandauO(1)$ and $\rrb{\varphi(t)}^{\chi_N-1} \varphi'(t) = \LandauO(|t-t_0|^{\beta_0\Re\chi_N-1})$, as $t \rightarrow t_0$, with $t \in \mathcal{P}_r$. By Lemma \ref{LemAnGenFctAlgKer}, absolute convergence and analyticity of $\mathcal{M}_{\varphi, \mathcal{P}_r}^N\curbr{a}(z)$ is thus guaranteed for $\Re z < \Re\chi_N$, so that altogether the whole expansion for $\mathcal{M}_{\varphi, \mathcal{P}_r }\curbr{a}(z)$ is meromorphic there. Notice that this expansion indeed represents an analytic continuation of the original integral (\ref{AsymptExpByAnalyticCont10}), since $\Re \chi_N > \Re\chi_0$. The residue at each of the simple poles $(\chi_n)_{0 \leq n \leq N-1}$ is obvious. It remains to examine the asymptotic behaviour of the expansion. Clearly, for $0 \leq n \leq N-1$, the $n$-th summand is $\LandauO\curbr{ \abs{z}^{-1} }$, as $\Im z \rightarrow \pm\infty$. To determine the behaviour of $\mathcal{M}_{\varphi, \mathcal{P}_r}^N\curbr{a}(z)$, appealing to the continuous differentiability of $F_N(t)$, we integrate by parts, for $\Re z < \Re \chi_N$, yielding
\begin{align*}
\mathcal{M}_{\varphi, \mathcal{P}_r}^N\curbr{a}(z) = e^{i\omega_\mathcal{P}} \frac{ \rrb{\varphi(r)}^{\chi_N-z} }{ \beta_0 (\chi_N-z) } F_N(r) - \frac{ 1 }{ \beta_0(\chi_N-z) } \intl_{\mathcal{P}_r} \rrb{\varphi(t)}^{\chi_N-z} F_N'(t) dt.
\end{align*}
By Lemma \ref{LemAnGenFctAlgKer}, the integral on the right hand side converges absolutely and is holomorphic in the half plane $\Re z < \Re\chi_{N+1}$. Hence, $\mathcal{M}_{\varphi, \mathcal{P}_r}^N\curbr{a}(z) = \LandauO\curbr{ |z|^{-1} }$, as $\Im z\rightarrow \pm\infty$, and the proof is finished.
\end{proof}

\section{Expansions for generalized Laplace transforms with a large argument} \label{SecDerivAsympStatI}

We eventually deploy our findings from $\S$\ref{AnContTeixExp} for the derivation of asymptotic expansions for the integral $\mathcal{L}_{ \phi, \mathcal{P} }\curbr{a}(\lambda)$. The associated ingredient functions are supposed to have the following properties.

\begin{ass} \label{AssIngFctLT}
$\phi(t) > \phi(t_0) \geq 0$, for any $t \in \mathcal{P} \cup \curbr{T}$, and it exists $\lambda_0 \geq 0$, with $\int_\mathcal{P} \exp\curbr{-\lambda_0\phi(t)} |a(t)| dt < \infty$.
\end{ass}

Notice that $\phi(T)$ need not be finite. The transition from $\mathcal{L}_{ \phi, \mathcal{P} }\curbr{a}(\lambda)$ to $\mathcal{M}_{ \varphi, \mathcal{P} }\curbr{a}(z)$ is managed by the next condition, according to which the Laplacian kernel can be represented as a Mellin-Barnes integral.

\begin{ass} \label{LemExpwrtAsympScaleA}
For any fixed $\lambda > 0$, it exist $\zeta_\lambda > 0$ and $\Phi : \C \times (0, \infty) \rightarrow \C$, with
\begin{align*}
e^{-\lambda \phi(t)} = \frac{1}{2\pi i} \intl_{x_0-i\infty}^{x_0+i\infty} \Phi(z, \lambda) \curbr{\varphi(t)}^{-z} dz \hspace{1cm} (t \in \mathcal{P}),
\end{align*}
for each $0 < x_0 < \zeta_\lambda$. In this, $\zeta_\lambda \rightarrow \infty$, as $\lambda \rightarrow \infty$. Besides, $\Phi(z, \lambda)$, for fixed $\lambda>0$, is $z$-holomorphic in the strip $0 < \Re z < \zeta_\lambda$ and $\Phi(z,\lambda) = \LandauO\curbr{ |z|^{-2} }$, as $\Im z \rightarrow \pm\infty$ there. For fixed $0 < \Re z_1 < \Re z_2 < \zeta_\lambda$, we also have
\begin{align*}
\Phi(z_2, \lambda) = o\rrb{ \Phi(z_1, \lambda) } \hspace{1cm} (\lambda \rightarrow \infty),
\end{align*}
as well as, for each $x > 0$,
\begin{align*}
\intl_{-\infty}^\infty \frac{ |\Phi(x+iy, \lambda)| }{ |x+iy| } dy = \LandauO\rrb{\Phi(x, \lambda)} \hspace{1cm} (\lambda \rightarrow \infty).
\end{align*}
Lastly, for each $c > 0$, it holds that $e^{-c \lambda} = o\curbr{ \Phi(z, \lambda) }$, as $\lambda \rightarrow \infty$, for all $\Re z > 0$.
\end{ass}

The function $\Phi(\cdot, \lambda)$ generates the asymptotic sequence, in terms of which we finally wish to expand $\mathcal{L}_{ \phi, \mathcal{P} }\curbr{a}(\lambda)$. According to the Mellin inversion formula, if $\varphi$ is monotonic along $\mathcal{P}$, as in all our applications below, $\Phi(\cdot, \lambda)$ constitutes the Mellin transform of $s \mapsto \exp\curbr{-\lambda(\phi\circ\varphi^{-1})(s)}$. We are now ready for the main theorem. Based on the residue theorem, it provides the foundation for numerous subsequent results.

\begin{theo} \label{TheoExpwrtAsympScale}
Under Assumptions \ref{AnContTeixExpA} and \ref{AssIngFctLT}, if it exists a function $\varphi : \mathcal{P} \rightarrow (0, \infty)$, such that additionally Assumptions \ref{AssIngGenFct2} and \ref{LemExpwrtAsympScaleA} hold, then
\begin{align*}
\mathcal{L}_{ \phi, \mathcal{P} }\curbr{a}(\lambda) \sim e^{i \omega_\mathcal{P} } \suml_{n=0}^\infty \Phi(\chi_n, \lambda) \operatorname{c}_\mathcal{P}(n) \hspace{1cm} (\lambda \rightarrow \infty).
\end{align*}
The parameters $( \chi_n )_{n \in \N_0}$ were given in (\ref{ZeroAlgBound2c}), and the coefficients $( \operatorname{c}_\mathcal{P}(n) )_{n \in \N_0}$ are available in (\ref{ZeroAlgBound14XY}) or (\ref{ZeroAlgBound21b}).
\end{theo}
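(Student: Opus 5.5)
We split $\mathcal{L}_{\phi,\mathcal{P}}\curbr{a}(\lambda)$ into a local piece near $t_0$ and a tail, and treat the local piece by the Mellin--Barnes representation of Assumption \ref{LemExpwrtAsympScaleA} combined with the continuation of Theorem \ref{TheoAnContAnIng}.

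\emph{Step 1: the tail.} Fix $0<r<r_0$ and write $\mathcal{L}_{\phi,\mathcal{P}}\curbr{a}(\lambda)=\mathcal{L}_{\phi,\mathcal{P}_r}\curbr{a}(\lambda)+\mathcal{L}_{\phi,\mathcal{P}\setminus\mathcal{P}_r}\curbr{a}(\lambda)$. By Assumption \ref{AssIngFctLT}, $\phi(t)>\phi(t_0)$ on $\mathcal{P}\cup\curbr{T}$. Together with continuity, this should give $\delta:=\inf_{\mathcal{P}\setminus\mathcal{P}_r}\phi-\phi(t_0)>0$; the end $T$ is covered by the strict inequality there and by the case $\phi(T)=\infty$. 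Splitting $e^{-\lambda\phi}=e^{-(\lambda-\lambda_0)\phi}e^{-\lambda_0\phi}$ then bounds the tail by $\LandauO(e^{-\lambda(\phi(t_0)+\delta)})$. For the asymptotic conclusion I would also use $\phi(t_0)=0$. This should follow from the representation in Assumption \ref{LemExpwrtAsympScaleA}: letting $t\to t_0$ there forces $\varphi\to0$, since $\beta_0>0$. The last clause of Assumption \ref{LemExpwrtAsympScaleA} then makes the tail $o(\Phi(z,\lambda))$ for every $\Re z>0$, so it is negligible to all orders.

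\emph{Step 2: Mellin--Barnes representation of the local piece.} For large $\lambda$ and $0<x_0<\min\curbr{\zeta_\lambda,\Re\chi_0}$, insert the kernel representation into $\mathcal{L}_{\phi,\mathcal{P}_r}\curbr{a}(\lambda)$. Interchanging the integrals by Fubini gives
\begin{align*}
\mathcal{L}_{\phi,\mathcal{P}_r}\curbr{a}(\lambda)=\frac{1}{2\pi i}\intl_{x_0-i\infty}^{x_0+i\infty}\Phi(z,\lambda)\,\mathcal{M}_{\varphi,\mathcal{P}_r}\curbr{a}(z)\,dz .
\end{align*}
Fubini is justified because $\int_{\mathcal{P}_r}\varphi^{-x_0}|a|\,dt<\infty$ for $x_0<\Re\chi_0$ (Lemma \ref{LemAnGenFctAlgKer}) and $\Phi(\cdot,\lambda)$ is integrable on the line by the bound $\LandauO(|z|^{-2})$.

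\emph{Step 3: shift the contour.} Given $N$, choose $x_N$ with $\Re\chi_{N-1}<x_N<\Re\chi_N$, avoiding the poles. Take $\lambda$ large enough that $\zeta_\lambda>x_N$. By Theorem \ref{TheoAnContAnIng}, the integrand is meromorphic in $0<\Re z<x_N$ with simple poles at $\chi_n$ only. Its decay $\Phi=\LandauO(|z|^{-2})$ times $\mathcal{M}=\LandauO(|z|^{-1})$ kills the horizontal segments. The residue theorem, shifting right and hence with a minus sign, gives
\begin{align*}
\mathcal{L}_{\phi,\mathcal{P}_r}\curbr{a}(\lambda)=e^{i\omega_\mathcal{P}}\suml_{n=0}^{N-1}\Phi(\chi_n,\lambda)\operatorname{c}_\mathcal{P}(n)+\frac{1}{2\pi i}\intl_{x_N-i\infty}^{x_N+i\infty}\Phi(z,\lambda)\,\mathcal{M}_{\varphi,\mathcal{P}_r}\curbr{a}(z)\,dz ,
\end{align*}
using the residues (\ref{ApplicOfExp5}). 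The problem is thus reduced to controlling this remaining integral.

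\emph{Step 4: the remainder estimate, the main obstacle.} Since $\mathcal{M}_{\varphi,\mathcal{P}_r}\curbr{a}$ is $\LandauO(|z|^{-1})$ on the line $\Re z=x_N$, uniformly and independently of $\lambda$, the remainder is bounded by
\begin{align*}
C\intl_{-\infty}^\infty\frac{|\Phi(x_N+iy,\lambda)|}{|x_N+iy|}\,dy=\LandauO(\Phi(x_N,\lambda)),
\end{align*}
by the integral condition in Assumption \ref{LemExpwrtAsympScaleA}. Because $x_N>\Re\chi_{N-1}$, the ordering condition gives $\Phi(x_N,\lambda)=o(\Phi(\chi_{N-1},\lambda))$. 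More precisely, $\Phi(x_N,\lambda)=o(\Phi(\chi_{n},\lambda))$ for every $n\le N-1$, which is the required Poincaré-type remainder relative to the scale $\Phi(\chi_n,\lambda)$.

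The delicate points sit in this step and the previous one. First, the $\LandauO(|z|^{-1})$ bound of Theorem \ref{TheoAnContAnIng} must be uniform on a whole vertical line, including near the finite part, which is fine since we avoid the poles. Second, $\Phi$ is only known to be holomorphic for $\Re z<\zeta_\lambda$, so the contour shift is legitimate only for large $\lambda$. That suffices, because $\zeta_\lambda\to\infty$.

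Finally, when some $\operatorname{c}_\mathcal{P}(n)=0$, the corresponding terms simply drop out and the statement is read in the generalized sense with respect to the scale $(\Phi(\chi_n,\lambda))_n$. Since $N$ is arbitrary, the expansion follows.
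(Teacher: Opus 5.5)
Your proposal is correct and follows the paper's proof essentially step for step. It uses the Fubini-based Mellin--Barnes representation on $\mathcal{P}_r$, the rightward contour shift past the simple poles $\chi_n$ with residues from (\ref{ApplicOfExp5}), the $\LandauO(\Phi(x_N,\lambda))$ remainder bound from the integral condition of Assumption \ref{LemExpwrtAsympScaleA}, and disposes of the tail via that assumption's last clause; your detour through $\phi(t_0)=0$ is superfluous (and letting $t\to t_0$ in the kernel representation does not actually establish it), since your tail bound $\LandauO(e^{-\lambda(\phi(t_0)+\delta)})$ already has exponent $\phi(t_0)+\delta\geq\delta>0$, which is all the last clause needs.
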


In the theory of Mellin-Barnes integrals, the step in the proof below, in which the initial integral gets replaced by a sum of residues and a remainder integral, is often concisely referred to as a displacement of the integration path.

\begin{proof}[Proof of Theorem \ref{TheoExpwrtAsympScale}]
As a consequence of Assumption \ref{AssIngFctLT}, the integral definition of the Laplace transform $\mathcal{L}_{ \phi, \mathcal{P} }\curbr{a}(\lambda)$, compare (\ref{AsymptExpByAnalyticCont1}), converges absolutely, for all $\lambda \geq \lambda_0$. Moreover, due to Assumption \ref{AssIngGenFct2}, it exists a parameter $r$ as in Theorem \ref{TheoAnContAnIng}. Then, the integral de\-fi\-nition of the generating function $\mathcal{M}_{\varphi, \mathcal{P}_r }\curbr{a}(z)$, see (\ref{AsymptExpByAnalyticCont10}), converges absolutely and is holomorphic in $\Re z < \Re\chi_0$. Upon writing the kernel of $\mathcal{L}_{\phi, \mathcal{P}_r }\curbr{a}(\lambda)$ in terms of the integral representation from Assumption \ref{LemExpwrtAsympScaleA}, for $0 < x_0 < \min\curbr{\Re\chi_0, \zeta_\lambda}$, especially by absolute convergence, we may thus interchange the order of integration, leading to the Mellin-Barnes representation
\begin{align*}
\mathcal{L}_{\phi, \mathcal{P}_r }\curbr{a}(\lambda) &= \frac{1}{2\pi i} \intl_{x_0-i\infty}^{x_0+i\infty} \Phi(z, \lambda) \intl_{\mathcal{P}_r} \rrb{\varphi(t)}^{-z} a(t) dt dz \\
&= \frac{1}{2\pi i} \intl_{x_0-i\infty}^{x_0+i\infty} \Phi(z, \lambda) \mathcal{M}_{\varphi, \mathcal{P}_r }\curbr{a}(z) dz.
\end{align*}
With regard to the analytic continuation of $\mathcal{M}_{\varphi, \mathcal{P}_r }\curbr{a}(z)$, we may directly refer to Theorem \ref{TheoAnContAnIng}. Accordingly, the function can be extended meromorphically to the half plane $\Re z < \Re\chi_N$, for arbitrary $N \in \N$, where it possibly exhibits simple poles at $z = \chi_n$, for $0 \leq n \leq N-1$. Moreover, $\mathcal{M}_{\varphi, \mathcal{P}_r }\curbr{a}(z) = \LandauO(|z|^{-1})$, as $\Im z\rightarrow\pm\infty$. Fix $N \in \N$ and $\lambda > \lambda_0$, such that $\zeta_\lambda > \Re\chi_N$. Then, for $K > 0$ and $\Re\chi_{N-1} < x_N < \Re\chi_N$, consider in the complex $z$-plane a rectangle with vertices $\curbr{x_0 - iK, x_N - iK, x_N + iK, x_0 + iK}$. The only possible singularities in its interior are the points $\chi_n$, for $0 \leq n \leq N-1$, so that the integral of $\Phi(z,\lambda)\mathcal{M}_{\varphi, \mathcal{P}_r }\curbr{a}(z)$ along the clockwisely traversed boundary of this rectangle, due to the residue theorem, exactly coincides with the sum of the associated residues, except for a negative sign. Because $\Phi(z, \lambda) \mathcal{M}_{\varphi, \mathcal{P}_r }\curbr{a}(z) = \LandauO(|z|^{-3})$, as $\Im z \rightarrow \pm\infty$, the integrals along the upper and lower edges eventually both vanish, as $K \rightarrow \infty$. Altogether, we get
\begin{align*}
\mathcal{L}_{\phi, \mathcal{P}_r }\curbr{a}(\lambda) = - \suml_{n = 0}^{N-1} \Res{ z = \chi_n }{ \Phi(z, \lambda) \mathcal{M}_{\varphi, \mathcal{P}_r }\curbr{a}(z) } + \frac{1}{2\pi i} \intl_{x_N-i\infty}^{x_N+i\infty} \Phi(z, \lambda) \mathcal{M}_{\varphi, \mathcal{P}_r }\curbr{a}(z) dz.
\end{align*}
Since the singularities at $z=\chi_n$ are simple poles, (\ref{ApplicOfExp5}) yields
\begin{align*}
\Res{ z = \chi_n }{ \Phi(z, \lambda) \mathcal{M}_{\varphi, \mathcal{P}_r }\curbr{a}(z) } = - e^{i\omega_\mathcal{P}} \Phi(\chi_n, \lambda) \operatorname{c}_\mathcal{P}(n) \hspace{1cm} (0 \leq n \leq N-1).
\end{align*}
With regard to the remainder integral, due to the choice of $x_N$, from Theorem \ref{TheoAnContAnIng}, we know that $\mathcal{M}_{\varphi, \mathcal{P}_r }\curbr{a}(z)$ is analytic in a neighborhood of the line $\Re z = x_N$, i.e., $\mathcal{M}_{\varphi, \mathcal{P}_r }\curbr{a}(x_N+iy)$ is continuous with respect to $y\in\R$. In addition, $\mathcal{M}_{\varphi, \mathcal{P}_r }\curbr{a}(x_N+iy) = \LandauO\curbr{|x_N+iy|^{-1}}$, as $y \rightarrow \pm\infty$. Hence, we infer the existence of $M > 0$, such that
\begin{align*}
\abs{ \intl_{x_N-i\infty}^{x_N+i\infty} \Phi(z, \lambda) \mathcal{M}_{\varphi, \mathcal{P}_r }\curbr{a}(z) dz } \leq \frac{M}{2\pi} \intl_{-\infty}^\infty \frac{ \abs{ \Phi(x_N+iy, \lambda) } }{ |x_N+iy| } dy =\bigO{ \Phi(x_N, \lambda) },
\end{align*}
where the big-$\LandauO$ estimate holds by Assumption \ref{LemExpwrtAsympScaleA}. From this assumption, for each $1\leq n \leq N$, especially since $\Re\chi_{n-1} < \Re\chi_n$, we also know that $\Phi(\chi_n,\lambda) = o\rrb{ \Phi(\chi_{n-1},\lambda) }$, as $\lambda \rightarrow \infty$. The above sum of residues therefore exhibits an asymptotic character for large $\lambda$. Finally, $\mathcal{L}_{ \phi, \mathcal{P} \setminus \mathcal{P}_r }\curbr{a}(\lambda) = \LandauO( \exp\curbr{-\lambda \inf_{t \in \mathcal{P} \setminus \mathcal{P}_r} \phi(t)} )$, as $\lambda \rightarrow \infty$. Recalling that $\phi(t) > \phi(t_0)$, for each $t \in \mathcal{P}$, by Assumption \ref{AssIngFctLT}, we conclude from Assumption \ref{LemExpwrtAsympScaleA} that the contribution of $\mathcal{L}_{ \phi, \mathcal{P} \setminus \mathcal{P}_r }\curbr{a}(\lambda)$ is negligible compared with $\mathcal{L}_{ \phi, \mathcal{P}_r }\curbr{a}(\lambda)$, which completes the proof.
\end{proof}

Given a function $\varphi : \mathcal{P} \rightarrow (0, \infty)$ that satisfies Assumption \ref{LemExpwrtAsympScaleA}, the previous theorem immediately yields a concise criterion for the existence of an asymptotic expansion for $\mathcal{L}_{ \phi, \mathcal{P} }\curbr{a}(\lambda)$ in terms of $\Phi(\cdot, \lambda)$. We proceed with various examples for such functions, where the first leads us to the well-known approximation due to Laplace.

\begin{cor}[Laplace] \label{CorExpPwrsLambd1}
Under Assumptions \ref{AnContTeixExpA} and \ref{AssIngFctLT}, define $\varphi(t) := \phi(t)-\phi(t_0)$, for $t \in \mathcal{P}$. If then also Assumption \ref{AssIngGenFct2} holds, we have
\begin{align*}
\mathcal{L}_{ \phi, \mathcal{P} }\curbr{a}(\lambda) \sim e^{i\omega_\mathcal{P} - \lambda \phi(t_0)} \suml_{n=0}^\infty \lambda^{-\chi_n} \Gamma\rb{\chi_n} \operatorname{c}_\mathcal{P}(n) \hspace{1cm} (\lambda \rightarrow \infty).
\end{align*}
\end{cor}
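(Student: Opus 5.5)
The plan is to reduce the corollary to Theorem \ref{TheoExpwrtAsympScale}. First, factor out the constant part of the phase: since $\phi(t) = \varphi(t) + \phi(t_0)$, we have
\begin{align*}
\mathcal{L}_{ \phi, \mathcal{P} }\curbr{a}(\lambda) = e^{-\lambda\phi(t_0)} \mathcal{L}_{ \varphi, \mathcal{P} }\curbr{a}(\lambda).
\end{align*}
It therefore suffices to treat the transform with phase $\varphi$. That phase satisfies Assumption \ref{AssIngFctLT} with $\varphi(t_0) = 0$. Indeed, $\varphi(t) > 0 = \varphi(t_0)$ on $\mathcal{P} \cup \curbr{T}$, and the integrability condition with $\lambda_0$ is unchanged up to the finite factor $e^{\lambda_0\phi(t_0)}$. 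Assumptions \ref{AnContTeixExpA} and \ref{AssIngGenFct2} are given for this $\varphi$, so the only remaining task is to verify Assumption \ref{LemExpwrtAsympScaleA}. The natural choice is $\Phi(z, \lambda) := \lambda^{-z}\Gamma(z)$ with $\zeta_\lambda := \infty$. Applying Theorem \ref{TheoExpwrtAsympScale} then gives the series $e^{i\omega_\mathcal{P}}\sum_n \lambda^{-\chi_n}\Gamma(\chi_n)\operatorname{c}_\mathcal{P}(n)$, and multiplying by $e^{-\lambda\phi(t_0)}$ yields the claim.

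The verification of Assumption \ref{LemExpwrtAsympScaleA} proceeds item by item.

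\emph{The representation.} This is the Cahen--Mellin integral (\ref{AsymptExpByAnalyticCont12}) with $\phi$ replaced by $\varphi(t) > 0$. It is valid for every $x_0 > 0$, so we may take $\zeta_\lambda = \infty$ (or any sequence tending to infinity, e.g.\ $\zeta_\lambda = \lambda + 1$, if finiteness is preferred).

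\emph{Holomorphy and decay.} $\Gamma$ is holomorphic in $\Re z > 0$. By Stirling's formula, $|\Gamma(x+iy)| \sim \sqrt{2\pi}\,|y|^{x-1/2}e^{-\pi|y|/2}$ as $|y| \to \infty$, uniformly for $x$ in compacts. Together with $|\lambda^{-z}| = \lambda^{-x}$, this gives $\Phi = \LandauO(|z|^{-2})$, indeed exponential decay.

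\emph{Ordering.} For $0 < \Re z_1 < \Re z_2$, we have $|\Phi(z_2,\lambda)/\Phi(z_1,\lambda)| = \lambda^{\Re z_1-\Re z_2}|\Gamma(z_2)/\Gamma(z_1)| \to 0$. Here $\chi_n$ may be complex, but $\Gamma(\chi_n)$ is a nonzero constant.

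\emph{Integral bound.} For fixed $x > 0$,
\begin{align*}
\intl_{-\infty}^\infty \frac{|\Phi(x+iy,\lambda)|}{|x+iy|} dy = \lambda^{-x}\intl_{-\infty}^\infty \frac{|\Gamma(x+iy)|}{|x+iy|} dy.
\end{align*}
The last integral is a finite constant by Stirling, so the whole expression is $\LandauO(\lambda^{-x}) = \LandauO(\Phi(x,\lambda))$, since $\Gamma(x) > 0$.

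\emph{Exponential smallness.} For $c > 0$, $e^{-c\lambda}\lambda^{z} \to 0$, so $e^{-c\lambda} = o(\Phi(z,\lambda))$ for all $\Re z > 0$.

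The main potential obstacle is bookkeeping rather than analysis. One must confirm that the condition $\phi(t) > \phi(t_0)$ from Assumption \ref{AssIngFctLT} is exactly what makes $\varphi$ positive on $\mathcal{P}$, as required both by Assumption \ref{AssIngGenFct2} and by the Cahen--Mellin formula. One must also check that the tail estimate in the proof of Theorem \ref{TheoExpwrtAsympScale} uses $\inf_{\mathcal{P}\setminus\mathcal{P}_r}\varphi > 0$. That infimum is positive because $\varphi$ is continuous and positive on $\mathcal{P}\cup\curbr{T}$ away from $t_0$; if $T = \pm\infty$, the value $\varphi(T)$ is understood as a limit, possibly infinite, and is positive by assumption. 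With these checks, the corollary follows directly.
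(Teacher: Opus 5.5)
Your proposal is correct and follows the paper's own proof: you factor out $e^{-\lambda\phi(t_0)}$, represent $e^{-\lambda\varphi(t)}$ by the Cahen--Mellin integral with $\Phi(z,\lambda)=\lambda^{-z}\Gamma(z)$, check Assumption \ref{LemExpwrtAsympScaleA}, and apply Theorem \ref{TheoExpwrtAsympScale} to $\mathcal{L}_{\varphi,\mathcal{P}}\{a\}(\lambda)$. Your item-by-item checks (ordering, the $y$-integral bound, exponential smallness, positivity of $\inf_{\mathcal{P}\setminus\mathcal{P}_r}\varphi$) only spell out what the paper calls obvious.
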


\begin{proof}
First notice that $\varphi(t) > 0$, for $t \in \mathcal{P}$, by Assumption \ref{AssIngFctLT}, as well as that $\mathcal{L}_{ \phi, \mathcal{P} }\curbr{a}(\lambda) = \exp\curbr{-\lambda \phi(t_0)} \mathcal{L}_{ \varphi, \mathcal{P} }\curbr{a}(\lambda)$, for all $\lambda \geq 0$. Now, according to the Cahen-Mellin integral (\ref{AsymptExpByAnalyticCont12}), for each $t \in \mathcal{P}$ and $\lambda, x_0 > 0$, we can write
\begin{align*}
e^{-\lambda\varphi(t)} = \frac{1}{2\pi i} \intl_{x_0-i\infty}^{x_0+i\infty} \curbr{\varphi(t)}^{-z} \lambda^{-z} \Gamma(z) dz .
\end{align*}
Clearly, $\Phi(z, \lambda) := \lambda^{-z} \Gamma(z)$ is holomorphic in $\Re z > 0$, and it follows from the exponential decay of the gamma function that $\Phi(z,\lambda) = \LandauO( \lambda^{-\Re z} |z|^{-2} )$, as $\Im z\rightarrow \pm \infty$, for any $\lambda > 0$ (see (5.11.7) in \cite{olver2010nist}). Thereby, the validity of Assumption \ref{LemExpwrtAsympScaleA} becomes obvious, so that the asserted expansion is easily obtainable by application of Theorem \ref{TheoExpwrtAsympScale} to $\mathcal{L}_{\varphi, \mathcal{P} }\curbr{a}(\lambda)$.
\end{proof}

The Laplacian approximation corresponds to an ordinary power series expansion in the sense of Poincaré. An alternative, where the asymptotic sequence exhibits a similar character, is established by an inverse factorial expansion.

\begin{cor}[inverse factorial expansion] \label{CorInvFacExpILam}
Under Assumptions \ref{AnContTeixExpA} and \ref{AssIngFctLT}, define $\varphi(t) := 1-\exp\curbr{-\phi(t)}$, for $t \in \mathcal{P}$. If then also Assumption \ref{AssIngGenFct2} holds, we have
\begin{align*}
\mathcal{L}_{\phi, \mathcal{P} }\curbr{a}(\lambda) \sim e^{i \omega_\mathcal{P} } \suml_{n=0}^\infty \frac{\Gamma(\lambda+1)\Gamma\rb{\chi_n} }{\Gamma\rb{\lambda+1+\chi_n}} \operatorname{c}_\mathcal{P}(n) \hspace{1cm} (\lambda \rightarrow \infty).
\end{align*}
\end{cor}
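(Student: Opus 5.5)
The plan is to apply Theorem \ref{TheoExpwrtAsympScale} with $\varphi(t) := 1-\exp\curbr{-\phi(t)}$, in the same way as the proof of Corollary \ref{CorExpPwrsLambd1}. First, $\varphi(t) \in (0,1)$ for $t \in \mathcal{P}$: by Assumption \ref{AssIngFctLT}, $\phi(t) > \phi(t_0) \geq 0$. So the principal branch of $\curbr{\varphi(t)}^{-z}$ is real-analytic in $t$, and $\varphi$ is admissible in Assumption \ref{AssIngGenFct2}, which we assume. It then remains to verify Assumption \ref{LemExpwrtAsympScaleA} with
\begin{align*}
\Phi(z, \lambda) := \frac{\Gamma(\lambda+1)\Gamma(z)}{\Gamma(\lambda+1+z)} = \Beta(z, \lambda+1).
\end{align*}
With this choice, Theorem \ref{TheoExpwrtAsympScale} gives the asserted expansion directly.

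\textbf{Integral representation.} Write $s := \varphi(t) \in (0,1)$, so that $e^{-\lambda\phi(t)} = (1-s)^\lambda$. The Mellin transform of $s \mapsto (1-s)^\lambda \mathbf{1}_{(0,1)}(s)$ is $\int_0^1 s^{z-1}(1-s)^\lambda ds = \Beta(z,\lambda+1)$ for $\Re z > 0$. Since this function of $s$ is continuous and of bounded variation on $(0,1)$, Mellin inversion gives
\begin{align*}
(1-s)^\lambda = \frac{1}{2\pi i}\intl_{x_0-i\infty}^{x_0+i\infty} \Beta(z,\lambda+1) s^{-z} dz
\end{align*}
for every $x_0 > 0$. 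The integral converges absolutely once the decay below is established. Thus we may take $\zeta_\lambda := \infty$, or any value tending to infinity, and $\Phi(\cdot,\lambda)$ is holomorphic in $\Re z > 0$.

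\textbf{Growth conditions.} For fixed $\lambda > 0$, Stirling's formula (5.11.7 in \cite{olver2010nist}) gives $|\Gamma(z)/\Gamma(\lambda+1+z)| \sim |\Im z|^{-\lambda-1}$ as $\Im z \to \pm\infty$, locally uniformly in $\Re z$. This is $\LandauO(|z|^{-2})$ once $\lambda \geq 1$, and only large $\lambda$ matter. For the asymptotic-scale property, fix $z$ with $\Re z > 0$; then $\Gamma(\lambda+1)/\Gamma(\lambda+1+z) \sim \lambda^{-z}$ as $\lambda \to \infty$ (5.11.12 in \cite{olver2010nist}). Hence $\Phi(z,\lambda) \sim \Gamma(z)\lambda^{-z}$. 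This yields $\Phi(z_2,\lambda) = o(\Phi(z_1,\lambda))$ for $\Re z_1 < \Re z_2$, and also $e^{-c\lambda} = o(\Phi(z,\lambda))$ for every $c > 0$.

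\textbf{Main obstacle: the uniform estimate.} The hardest step is
\begin{align*}
\intl_{-\infty}^\infty |\Phi(x+iy,\lambda)|\,|x+iy|^{-1} dy = \LandauO(\Phi(x,\lambda)),
\end{align*}
since it needs a bound uniform in $y$ as $\lambda \to \infty$. I would avoid Stirling here and use the integral representation directly:
\begin{align*}
|\Beta(x+iy,\lambda+1)| \leq \int_0^1 s^{x-1}(1-s)^\lambda ds = \Beta(x,\lambda+1) = \Phi(x,\lambda).
\end{align*}
This handles the range $|y| \leq Y$ for any fixed $Y$. For the tails, integrate by parts once in $s$, or use the functional equation $\Beta(z,\lambda+1) = \frac{\lambda+1+z}{z}\Beta(z+1,\lambda+1)$ twice, to gain a factor $|z|^{-1}$ per step at the cost of $\lambda$-dependent numerators. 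A cleaner alternative is to split $\int_0^1 s^{z-1}(1-s)^\lambda ds$ at $s = \lambda^{-1/2}$, or to substitute $s = u/\lambda$. This reduces the problem to the corresponding Gamma estimate $\int |\Gamma(x+iy)|\lambda^{-x}|x+iy|^{-1}dy = \LandauO(\lambda^{-x})$, which holds because $\Gamma(x+iy)$ decays exponentially in $|y|$, while the error from $(1-u/\lambda)^\lambda$ versus $e^{-u}$ is $\LandauO(\lambda^{-x-1})$ uniformly in $y$ after one integration by parts. In the worst case, any $\LandauO(|z|^{-1-\delta}\lambda^{-x})$ bound uniform in $y$ suffices, since $\Phi(x,\lambda) \asymp \lambda^{-x}$. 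Once this is in place, all hypotheses of Theorem \ref{TheoExpwrtAsympScale} hold, and the corollary follows.
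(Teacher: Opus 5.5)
Your proposal is correct and follows the paper's proof. You apply Theorem \ref{TheoExpwrtAsympScale} with $\Phi(z,\lambda)=\Beta(z,\lambda+1)$, obtain the kernel representation by Mellin inversion of $s\mapsto(1-s)^\lambda$ on $(0,1)$, and read off the scale properties from $\Phi(z,\lambda)\sim\lambda^{-z}\Gamma(z)$ (5.11.12 in \cite{olver2010nist}).

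The one place you differ is the integral estimate. The paper does not prove it; it takes it from Lemma A.2 in \cite{Kaiser04102025}. You try to prove it directly, and only the first option you list is needed. Integrating by parts once in $s$ gives $\Beta(z,\lambda+1)=\frac{\lambda}{z}\Beta(z+1,\lambda)$ for $\lambda>0$ and $\Re z>0$. Since $\lambda\Beta(x+1,\lambda)=x\Beta(x,\lambda+1)$, the factor $\lambda$ cancels exactly:
\[
|\Phi(x+iy,\lambda)|\le\frac{x}{|x+iy|}\,\Phi(x,\lambda),\qquad\text{hence}\qquad \int_{-\infty}^\infty\frac{|\Phi(x+iy,\lambda)|}{|x+iy|}\,dy\le\pi\,\Phi(x,\lambda)
\]
for every $\lambda>0$, with no asymptotics needed. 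This is the same device the paper uses for $U(\lambda+1,2,z)$ and for $\mathcal{M}_{\operatorname{B}}$.

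The functional-equation variant $\Beta(z,\lambda+1)=\frac{\lambda+1+z}{z}\Beta(z+1,\lambda+1)$ does not work and should be dropped. Its prefactor tends to $1$ as $|y|\to\infty$, so iterating it never produces decay in $y$. The resulting bound on the tail $|y|>\lambda$ still diverges logarithmically.

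The comparison with $\Gamma$ via $s=u/\lambda$ can be made rigorous. However, it also needs an integration by parts in the truncated tail $\int_\lambda^\infty u^{z-1}e^{-u}\,du$, so it is more work, not less.

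Your remark that $\Phi(z,\lambda)=\LandauO(|z|^{-2})$ holds only for $\lambda\geq1$ is correct. It is slightly more careful than the paper's $\LandauO(|z|^{-\lambda-1})$ statement, which glosses over $0<\lambda<1$.
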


It is well-known that the $n$-th summand fulfills $\sim \mbox{const} \times \lambda^{-\chi_n}$, as $\lambda \rightarrow \infty$ (see (5.11.12) in \cite{olver2010nist}), i.e., the series indeed also exhibits a descending algebraic character with respect to $\lambda$. Furthermore, due to the series expansion of the exponential function, $\varphi$ and $\phi$ have the same order, whenever $\phi$ is small. The choice of Corollary \ref{CorInvFacExpILam} over Corollary \ref{CorExpPwrsLambd1} is hence mainly governed by the computability of the associated coefficients.

\begin{proof}[Proof of Corollary \ref{CorInvFacExpILam}]
In view of Assumption \ref{AssIngFctLT}, it is clear that $0 < \varphi(t) \leq 1$, for $t \in \mathcal{P}$. Furthermore, for fixed $\lambda > 0$, the beta function $\Phi(z, \lambda) := \Beta(\lambda+1, z)$ constitutes the Mellin transform of the continuous function $f_\lambda : (0, \infty) \rightarrow [0, 1)$, defined by $f_\lambda(s) := (1-s)^\lambda$, for $0 < s \leq 1$, and $f_\lambda(s) := 0$ else. Observe that $f_\lambda(\varphi(t)) = \exp\curbr{-\lambda\phi(t)}$. Thus, appealing to the Mellin inversion formula and the connection between the beta and the gamma function (cf. (5.12.1) in \cite{olver2010nist}), for each $t \in \mathcal{P}$ and $x_0 > 0$, it holds that
\begin{align} \label{AsymptExpByAnalyticCont7}
e^{-\lambda\phi(t)} = \frac{1}{2\pi i} \intl_{x_0-i\infty}^{x_0+i\infty} \rrb{ \varphi(t) }^{-z} \frac{\Gamma(\lambda+1)\Gamma(z)}{\Gamma(\lambda+1+z)} dz.
\end{align}
In addition, $\Phi(z, \lambda) \sim \lambda^{-z} \Gamma(z)$, as $\lambda\rightarrow\infty$, for each $z \in \C\setminus -\N_0$, and conversely $\Phi(z,\lambda) = \LandauO(|z|^{-\lambda-1})$, as $\abs{z}\rightarrow\infty$, for any $\lambda>0$ (cf. (5.11.12) in \cite{olver2010nist}). Finally, the required order estimate for the integral of $\Phi(z,\lambda)$ is a direct consequence of Lemma A.2 in \cite{Kaiser04102025}. Altogether, Assumption \ref{LemExpwrtAsympScaleA} holds and Theorem \ref{TheoExpwrtAsympScale} applies.
\end{proof}

So far, we only mentioned cases, in which an asymptotic series for $\mathcal{L}_{\phi, \mathcal{P} }\curbr{a}(\lambda)$, as $\lambda \rightarrow \infty$, in powers of $\lambda$ exists. However, this clearly need not be the case, e.g., if $\phi(t)$ exhibits exponential behaviour near the critical point, whereas $a(t)$ is algebraic there (see Example \ref{Ex2026042101} below). Then, Corollaries \ref{CorExpPwrsLambd1} and \ref{CorInvFacExpILam} are inapplicable. With regard to such situations, we invoke the recently introduced Mellin transform of the beta function, whose integral definition is
\begin{align} \label{2026061602}
\mathcal{M}_{\operatorname{B}}(z,\lambda-1) := \intl_0^\infty t^{-z-1} (1-e^{-t})^\lambda dt.
\end{align}
It represents a holomorphic function in $0 < \Re z < \lambda$, for each $\lambda > 0$. Partial integration shows that it is a special case of the general definition from \cite{Kaiser04102025}, namely $\mathcal{M}_{\operatorname{B}}(\cdot,\lambda-1) = \mathcal{M}_{\operatorname{B}}(\cdot,\lambda-1, 1)$. The main result of \cite{Kaiser04102025} is Theorem 3.3, according to which
\begin{align} \label{2026042102}
\mathcal{M}_{\operatorname{B}}(z,\lambda-1) \sim \frac{ (\psi(\lambda+1)+\gamma)^{-z} }{z} \hspace{1cm} (\lambda \rightarrow\infty),
\end{align}
for $z \in \C \setminus\curbr{0}$, where $\psi$ is the digamma function, with $\psi(\lambda+1) \sim \log \lambda$, while $\gamma \approx 0.5772$ refers to the Euler-Mascheroni constant. Hence, $\mathcal{M}_{\operatorname{B}}(z,\lambda-1)$ has asymptotic scaling properties, with a logarithmic character.

\begin{cor}[logarithmic-type expansion] \label{CorAnLogPwrFct}
Under Assumptions \ref{AnContTeixExpA} and \ref{AssIngFctLT}, define $\varphi(t) := (-\log(1-\exp\curbr{-\phi(t)}))^{-1}$, for $t \in \mathcal{P}$. If then also Assumption \ref{AssIngGenFct2} holds, we have
\begin{align*}
\mathcal{L}_{\phi, \mathcal{P} }\curbr{a}(\lambda) \sim e^{i \omega_\mathcal{P} } \suml_{n=0}^\infty \mathcal{M}_{\operatorname{B}}\rb{ \chi_n,\lambda-1} \operatorname{c}_\mathcal{P}(n) \hspace{1cm} (\lambda \rightarrow \infty).
\end{align*}
\end{cor}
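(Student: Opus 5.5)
Following the pattern of Corollaries \ref{CorExpPwrsLambd1} and \ref{CorInvFacExpILam}, I would check Assumption \ref{LemExpwrtAsympScaleA} with $\Phi(z,\lambda) := \mathcal{M}_{\operatorname{B}}(z,\lambda-1)$ and $\zeta_\lambda := \lambda$, and then apply Theorem \ref{TheoExpwrtAsympScale}. By Assumption \ref{AssIngFctLT}, $\phi(t)>0$ on $\mathcal{P}$, so $0<1-e^{-\phi(t)}<1$. Hence $-\log(1-e^{-\phi(t)})>0$, and $\varphi(t)>0$ on $\mathcal{P}$, as required. Writing $s=\varphi(t)$, we have $1-e^{-\phi(t)} = e^{-1/s}$, and therefore
\begin{align*}
e^{-\lambda\phi(t)} = \rrb{1-e^{-1/s}}^\lambda.
\end{align*}
Define $f_\lambda(s) := (1-e^{-1/s})^\lambda$ for $s>0$. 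Substituting $u=1/s$ in the Mellin transform gives
\begin{align*}
\intl_0^\infty s^{z-1} f_\lambda(s) ds = \intl_0^\infty u^{-z-1}(1-e^{-u})^\lambda du = \mathcal{M}_{\operatorname{B}}(z,\lambda-1).
\end{align*}
This converges for $0<\Re z<\lambda$: near $u=0$ the integrand is $\sim u^{\lambda-\Re z-1}$, and near $u=\infty$ it is $\sim u^{-\Re z-1}$. Since $f_\lambda$ is continuous and of bounded variation on $(0,\infty)$, Mellin inversion gives $e^{-\lambda\phi(t)} = \frac{1}{2\pi i}\int_{x_0-i\infty}^{x_0+i\infty}\mathcal{M}_{\operatorname{B}}(z,\lambda-1)\curbr{\varphi(t)}^{-z}dz$ for every $0<x_0<\lambda$. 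Clearly $\zeta_\lambda=\lambda\to\infty$.

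Next come holomorphy and decay in the strip. Holomorphy in $0<\Re z<\lambda$ is stated after (\ref{2026061602}). For the $\LandauO(|z|^{-2})$ bound, I would integrate (\ref{2026061602}) by parts twice in $u$, so that $u^{-z-1}$ is integrated to $u^{-z+1}/(z(z-1))$. For $\lambda>2$ and $\Re z$ away from $\lambda$ and $0$, the boundary terms vanish. The resulting integral $\int_0^\infty u^{1-z}\frac{d^2}{du^2}(1-e^{-u})^\lambda du$ converges absolutely uniformly in $\Im z$: near $0$ the second derivative is $\LandauO(u^{\lambda-2})$, and at $\infty$ it decays exponentially. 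This gives $\Phi = \LandauO(|z|^{-2})$ locally uniformly in $\Re z$, which is enough to justify the contour argument of Theorem \ref{TheoExpwrtAsympScale}.

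The asymptotic-scale properties follow from (\ref{2026042102}). For fixed $0<\Re z_1<\Re z_2$, it gives $\Phi(z_2,\lambda)/\Phi(z_1,\lambda)\sim (z_1/z_2)(\psi(\lambda+1)+\gamma)^{z_1-z_2}\to0$, because $\psi(\lambda+1)\to\infty$. It also shows that $\Phi(z,\lambda)$ decays only like a power of $\log\lambda$, so $e^{-c\lambda}=o(\Phi(z,\lambda))$ for every $c>0$ and every $\Re z>0$. These are pointwise statements at fixed $z$, which is exactly what Assumption \ref{LemExpwrtAsympScaleA} asks for. Note also that $\chi_n$ need not be real, but the conditions are stated for general complex $z$ with $\Re z>0$.

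The main obstacle is the uniform integral estimate $\int_{-\infty}^\infty |\Phi(x+iy,\lambda)|\,|x+iy|^{-1}dy = \LandauO(\Phi(x,\lambda))$, since (\ref{2026042102}) is only a pointwise statement. In the inverse factorial case the paper cited Lemma A.2 of \cite{Kaiser04102025}, and I would first try to quote the analogous uniform estimate for $\mathcal{M}_{\operatorname{B}}(z,\lambda-1,1)$ from that paper. Failing that, I would prove it directly. After one integration by parts, $z\Phi(z,\lambda)=\int_0^\infty u^{-z}\,\lambda e^{-u}(1-e^{-u})^{\lambda-1}du$, which is the Mellin transform of a probability-like density $g_\lambda$ concentrated near $u\approx\log\lambda$. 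A second integration by parts bounds $|z\Phi(z,\lambda)|$ by a multiple of $x^{\cdot}$-moments times $\min(1,|y|^{-1}\cdot\text{total variation of }u^{-x}g_\lambda)$. Since the total variation is $\LandauO((\log\lambda)^{-x})$ uniformly, the $y$-integral stays comparable to $(\log\lambda)^{-x}\asymp x\Phi(x,\lambda)$, up to a possible $\log\log\lambda$ factor coming from the width of $g_\lambda$, which I would need to control carefully. With all parts of Assumption \ref{LemExpwrtAsympScaleA} verified, Theorem \ref{TheoExpwrtAsympScale} yields the stated expansion.
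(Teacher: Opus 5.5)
Your route is the paper's: verify Assumption \ref{LemExpwrtAsympScaleA} for $\Phi(z,\lambda):=\mathcal{M}_{\operatorname{B}}(z,\lambda-1)$ with $\zeta_\lambda=\lambda$, then apply Theorem \ref{TheoExpwrtAsympScale}. Several of your steps are fine: the substitution $u=1/s$ for the Mellin inversion, and the use of (\ref{2026042102}) for the scale properties. So is the double integration by parts for the $\LandauO(|z|^{-2})$ bound, which replaces the paper's appeal to Theorem 4.3 of \cite{Kaiser04102025}. It does not even need $\lambda>2$, since $u^{1-x}\frac{d^2}{du^2}(1-e^{-u})^\lambda=\LandauO(u^{\lambda-1-x})$ as $u\downarrow 0$.

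The gap is the step you flag yourself. The estimate $\int_{-\infty}^\infty|\Phi(x+iy,\lambda)|\,|x+iy|^{-1}dy=\LandauO(\Phi(x,\lambda))$ is left unproven, and your sketch (a second integration by parts plus a total-variation bound, with a possible residual $\log\log\lambda$) does not deliver it. Any extra factor of that kind already breaks the literal requirement of Assumption \ref{LemExpwrtAsympScaleA}, so Theorem \ref{TheoExpwrtAsympScale} could not be quoted as stated.

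The missing observation is that no additional decay in $y$ is needed, because $|x+iy|^{-2}$ is already integrable. Start from your own identity $z\Phi(z,\lambda)=\int_0^\infty u^{-z}g_\lambda(u)\,du$ with $g_\lambda(u)=\lambda e^{-u}(1-e^{-u})^{\lambda-1}\geq 0$. Using $|u^{-iy}|=1$ and the triangle inequality,
\begin{align*}
|x+iy|\,|\Phi(x+iy,\lambda)| \leq \intl_0^\infty u^{-x} g_\lambda(u)\,du = x\,\Phi(x,\lambda),
\end{align*}
and therefore
\begin{align*}
\intl_{-\infty}^\infty \frac{|\Phi(x+iy,\lambda)|}{|x+iy|}\,dy \leq x\,\Phi(x,\lambda) \intl_{-\infty}^\infty \frac{dy}{x^2+y^2} = \pi\,\Phi(x,\lambda) \hspace{1cm} (0<x<\lambda).
\end{align*}
This is exactly the bound $2x_0\mathcal{M}_{\operatorname{B}}(x_0,\lambda-1)\int_0^\infty (x_0^2+y^2)^{-1}dy$ used in the paper's proof. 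It is the same device as the estimate $|U(\lambda+1,2,z)|\leq\frac{\Re z}{|z|}U(\lambda+1,2,\Re z)$ in the exponential-type corollary. In your $\min(1,\cdot)$ bound, simply keep the first branch; the total-variation analysis is unnecessary. With this step inserted, your argument is complete.
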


Observe that $\varphi(t) \rightarrow 0$, if $\phi(t) \rightarrow 0$, yet, the decay is of much lower order than any power of $\phi(t)$, due to the logarithm.

\begin{proof}[Proof of Corollary \ref{CorAnLogPwrFct}]
By Assumption \ref{AssIngFctLT}, $\varphi(t) > 0$, for $t \in \mathcal{P}$. Hence, according to (\ref{2026061602}) and the Mellin inversion formula, for any $t \in \mathcal{P}$ and $0 < x_0 < \lambda$, we have
\begin{align*}
e^{-\lambda\phi(t)} = \frac{1}{2\pi i} \intl_{ x_0 - i\infty }^{ x_0 + i\infty} \rrb{ \varphi(t) }^{-z} \mathcal{M}_{\operatorname{B}}(z,\lambda-1) dz.
\end{align*}
Consider the function $\Phi(z, \lambda) := \mathcal{M}_{\operatorname{B}}(z,\lambda-1)$. Firstly, we know from Theorem 4.3 in \cite{Kaiser04102025} that $\mathcal{M}_{\operatorname{B}}(z,\lambda-1) = \LandauO( |z|^{-2} )$, as $\Im z \rightarrow \pm\infty$ in $0 < \Re z < \lambda$, for any fixed $\lambda>0$. By means of this theorem, for $\lambda>0$ and fixed $0 < x_0 < \lambda$, one can also show that
\begin{align*}
\intl_{-\infty}^\infty \frac{ |\mathcal{M}_{\operatorname{B}}(x_0+iy, \lambda-1)| }{ |x_0+iy| } dy \leq 2x_0 \mathcal{M}_{\operatorname{B}}(x_0, \lambda-1) \intl_0^\infty \frac{ dy }{ x_0^2+y^2 }.
\end{align*}
With the approximation (\ref{2026042102}) in mind, it finally becomes obvious that $\Phi(z, \lambda) = \mathcal{M}_{\operatorname{B}}(z,\lambda-1)$ as well satisfies Assumption \ref{LemExpwrtAsympScaleA}, implying that the asserted expansion is a direct consequence of Theorem \ref{TheoExpwrtAsympScale}.
\end{proof}

The case, in which $\mathcal{L}_{\phi, \mathcal{P} }\curbr{a}(\lambda)$ exhibits an exponential character is quite diffe\-rent from the previously considered cases, essentially because of the ambiguity of this notion. Generally speaking, if $\Phi(\cdot, \lambda)$ is an asymptotic scale, exponential character of $\mathcal{L}_{\phi, \mathcal{P} }\curbr{a}(\lambda)$ with respect to $\Phi(\cdot, \lambda)$ refers to an asymptotically dominating term of the form $\exp\curbr{z_1\Phi(z_2, \lambda)}$, for $z_1, z_2 \in \C$. Notice that $\exp\curbr{z_1\Phi(z_2, \lambda)}$ again gene\-rates an asymptotic scale with respect to each argument, given a fixed value of the other argument. In any case, exponential behaviour obviously depends on the asymptotic scale under consideration. Such a behaviour is particularly suggested, if the Mellin-Barnes representation of $\mathcal{L}_{\phi, \mathcal{P} }\curbr{a}(\lambda)$ (see the proof of Theorem \ref{TheoExpwrtAsympScale}) is applicable for all $x_0 > 0$, e.g., if $\curbr{\varphi(t)}^{-x_0}a(t)$ is uniformly bounded along $\mathcal{P}$. In Lemma \ref{Lem20260417}, we confirmed this hypothesis for $\Phi(z,\lambda) := \lambda^{-z} \Gamma(z)$. It is also noteworthy that the order of $\mathcal{L}_{\phi, \mathcal{P} }\curbr{a}(\lambda)$, according to Lemma \ref{Lem20260527}, can not exceed the factor $e^{-\lambda\sigma}$, for $\sigma > 0$. The actual challenge about exponential behaviour is the exact characterization. In the theory of Mellin-Barnes integrals, exponential expansions are often obtained from saddle point approximations or inverse factorial expansions (cf. $\S\S$5.3 and 5.4 in \cite{ParKam2001}). We do not outline these techniques right here and instead briefly mention an approach that follows the strategy of this text. Possibly the simplest case, in which $\mathcal{L}_{\phi, \mathcal{P} }\curbr{a}(\lambda)$ exhibits exponential behaviour (with respect to $\lambda$), occurs when $\phi(t_0)>0$, and this situation is already covered by the Laplacian approximation (Corollary \ref{CorExpPwrsLambd1}). On the other hand, cases in which $\mathcal{L}_{\phi, \mathcal{P} }\curbr{a}(\lambda)$ is exponential although $\phi(t_0)=0$ are manifold and suggest a sufficiently fast decay of $a(t)$ near $t=t_0$. Below, we confine to a specific situation. The corresponding result involves the confluent hypergeometric function of the second kind, also known as Tricomi's function (see (13.4.4) in \cite{olver2010nist}), for $\Re z, \Re b > 0$ and $c\in \C$, defined as
\begin{align} \label{2026061601}
U(b, c, z) := \frac{1}{\Gamma(b)} \intl_0^\infty e^{- zt} t^{b-1} (1+t)^{c-b-1} dt.
\end{align}
In $\S$10.3.2 in \cite{Temme2015}, it was shown that it can be expanded in terms of the modified Bessel function and hence, in view of (10.25.3) in \cite{olver2010nist}, for fixed $\Re z > 0$, we have
\begin{align} \label{2026042001}
\Gamma(\lambda+1)U(\lambda+1, 2, z) \sim \sqrt{\pi} z^{-\frac{3}{4}} (\lambda+1)^\frac{1}{4} e^{ \frac{z}{2} - 2\sqrt{z(\lambda+1)} } \hspace{1cm} (\lambda \rightarrow \infty).
\end{align}
Notice that this approximation has been obtained from a saddle point approach. In any case, it essentially justifies the final result of this section.

\begin{cor}[exponential-type expansion]
Under Assumptions \ref{AnContTeixExpA} and \ref{AssIngFctLT}, define $\varphi(t) := \exp\curbr{1-(1-\exp\curbr{-\phi(t)})^{-1}}$, for $t \in \mathcal{P}$. If then also Assumption \ref{AssIngGenFct2} holds, we have
\begin{align*}
\mathcal{L}_{\phi, \mathcal{P} }\curbr{a}(\lambda) \sim e^{i \omega_\mathcal{P} } \Gamma(\lambda+1) \suml_{n=0}^\infty U(\lambda+1, 2, \chi_n) \operatorname{c}_\mathcal{P}(n) \hspace{1cm} (\lambda \rightarrow \infty).
\end{align*}
\end{cor}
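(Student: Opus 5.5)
The plan is to follow the scheme of the previous corollaries: find the Mellin-Barnes representation of the kernel in terms of $\varphi$, identify $\Phi(z,\lambda)=\Gamma(\lambda+1)U(\lambda+1,2,z)$, check Assumption \ref{LemExpwrtAsympScaleA}, and then apply Theorem \ref{TheoExpwrtAsympScale}.

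\textbf{Kernel and range of $\varphi$.} By Assumption \ref{AssIngFctLT}, $\phi(t)>0$ on $\mathcal{P}$, so $0<1-e^{-\phi(t)}<1$. Hence $\varphi(t)\in(0,1)$. Set $s=\varphi(t)$, so that $-\log s=(1-e^{-\phi})^{-1}-1=:u\in(0,\infty)$. Then $1-e^{-\phi}=(1+u)^{-1}$, which gives
\[
e^{-\lambda\phi(t)}=\Bigl(1-\tfrac{1}{1+u}\Bigr)^{\lambda}=\Bigl(\tfrac{u}{1+u}\Bigr)^{\lambda}.
\]
So the kernel equals $f_\lambda(s)$, where $f_\lambda(s):=(\log(1/s))^\lambda(1-\log s)^{-\lambda}$ for $0<s<1$ and $f_\lambda(s):=0$ for $s\ge1$. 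This function is continuous and bounded on $(0,\infty)$ for $\lambda>0$.

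\textbf{Mellin transform.} Substituting $s=e^{-u}$ gives
\[
\int_0^\infty s^{z-1}f_\lambda(s)\,ds=\int_0^\infty e^{-zu}u^{\lambda}(1+u)^{-\lambda}\,du .
\]
This is exactly $\Gamma(\lambda+1)U(\lambda+1,2,z)$ by (\ref{2026061601}) with $b=\lambda+1$ and $c=2$, since $c-b-1=-\lambda$. The integral is holomorphic in $\Re z>0$, so $\zeta_\lambda=\infty$ works. Integrating by parts twice in $u$, the boundary terms vanish because $u^\lambda(1+u)^{-\lambda}$ vanishes at $0$ to order $\lambda$. When $\lambda\ge2$, or after one integration plus a Riemann--Lebesgue-type argument in general, this gives $\Phi(z,\lambda)=\LandauO(|z|^{-2})$ as $\Im z\to\pm\infty$. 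Mellin inversion, valid since $f_\lambda$ is continuous and $\Phi$ is integrable on vertical lines, then yields the representation required in Assumption \ref{LemExpwrtAsympScaleA} for every $x_0>0$.

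\textbf{Scale properties.} Approximation (\ref{2026042001}) gives
\[
\Phi(z_2,\lambda)/\Phi(z_1,\lambda)\sim (z_1/z_2)^{3/4}e^{(z_2-z_1)/2}\exp\{-2\sqrt{\lambda+1}(\sqrt{z_2}-\sqrt{z_1})\}.
\]
For $\Re z_2>\Re z_1>0$ we have $\Re\sqrt{z_2}>\Re\sqrt{z_1}$, which one should verify via the monotonicity of the real part of the principal square root along horizontal rays; if only real arguments are needed, this is trivial. So the ratio is $o(1)$. Likewise $e^{-c\lambda}=o(\Phi(z,\lambda))$, because $\Phi$ decays only like $e^{-2\sqrt{z\lambda}}$.

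\textbf{Main obstacle: the integral bound.} The hard step is $\int|\Phi(x+iy,\lambda)|/|x+iy|\,dy=\LandauO(\Phi(x,\lambda))$, because (\ref{2026042001}) is only pointwise in $z$. I would try the trivial bound first:
\[
|\Phi(x+iy,\lambda)|\le\int_0^\infty e^{-xu}u^\lambda(1+u)^{-\lambda}\,du=\Phi(x,\lambda).
\]
This controls the range $|y|\le Y$ by $\Phi(x,\lambda)\int dy/|x+iy|$, which is $\LandauO(\Phi(x,\lambda))$ for fixed $Y$. It leaves a logarithmic divergence at large $|y|$. To fix this, integrate by parts once:
\[
\Phi(z,\lambda)=z^{-1}\int_0^\infty e^{-zu}\,\lambda u^{\lambda-1}(1+u)^{-\lambda-1}\,du ,
\]
so that $|\Phi(x+iy,\lambda)|\le|z|^{-1}\lambda\,\Gamma(\lambda)U(\lambda,1,x)$. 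Then one must show $\lambda\Gamma(\lambda)U(\lambda,1,x)=\LandauO(\Phi(x,\lambda))$. By the same Bessel-type asymptotics as in (\ref{2026042001}), both sides behave like $\mathrm{const}\cdot\lambda^{\kappa}e^{-2\sqrt{x\lambda}}$ with exponents $\kappa$ that differ by a fixed power. A possible polynomial loss in $\lambda$ could be absorbed by a further integration by parts weighted appropriately, or by mimicking the argument of Lemma A.2 in \cite{Kaiser04102025}. With the integral bound established, all parts of Assumption \ref{LemExpwrtAsympScaleA} hold, and Theorem \ref{TheoExpwrtAsympScale} yields the asserted expansion.
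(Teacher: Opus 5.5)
Your route is the paper's: the same $f_\lambda$, the same identification $\Phi(z,\lambda)=\Gamma(\lambda+1)U(\lambda+1,2,z)$, the same single integration by parts, and then Theorem \ref{TheoExpwrtAsympScale}. However, the step you call the main obstacle is left unproven, and it looks hard only because of a misidentification.

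The integral you obtain after integrating by parts is not $\lambda\Gamma(\lambda)U(\lambda,1,x)$. Matching $u^{\lambda-1}(1+u)^{-\lambda-1}$ with (\ref{2026061601}) forces $b=\lambda$ and $c=0$. With $c=1$, the large-$\lambda$ Bessel asymptotics would indeed carry an extra factor of order $\sqrt{x\lambda}$ relative to $\Phi(x,\lambda)$. Neither of your suggested remedies is carried out, and a genuine loss of that kind would leave Assumption \ref{LemExpwrtAsympScaleA} unverified as stated.

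In fact there is no loss, and no asymptotics are needed. You already used
\begin{align*}
\frac{d}{du}\rrb{ u^\lambda(1+u)^{-\lambda} } = \lambda u^{\lambda-1}(1+u)^{-\lambda-1},
\end{align*}
so simply integrate by parts back at the real point $x=\Re z$:
\begin{align*}
\intl_0^\infty e^{-xu}\lambda u^{\lambda-1}(1+u)^{-\lambda-1}du = x\intl_0^\infty e^{-xu}u^\lambda(1+u)^{-\lambda}du = x\,\Phi(x,\lambda).
\end{align*}
Equivalently, Kummer's relation $U(a,b,x)=x^{1-b}U(a-b+1,2-b,x)$ gives $U(\lambda,0,x)=x\,U(\lambda+1,2,x)$. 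Hence $|\Phi(x+iy,\lambda)|\le \frac{x}{|x+iy|}\Phi(x,\lambda)$ for every $\lambda>0$, and
\begin{align*}
\intl_{-\infty}^\infty \frac{|\Phi(x+iy,\lambda)|}{|x+iy|}dy \le x\,\Phi(x,\lambda)\intl_{-\infty}^\infty\frac{dy}{x^2+y^2} = \pi\,\Phi(x,\lambda).
\end{align*}
This is exactly the paper's bound $|U(\lambda+1,2,z)|\le \frac{\Re z}{|z|}U(\lambda+1,2,\Re z)$. It also makes your split into $|y|\le Y$ and a tail unnecessary.

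Two smaller points. First, the implication $\Re z_2>\Re z_1>0\Rightarrow\Re\sqrt{z_2}>\Re\sqrt{z_1}$ is false in general; take $z_1=1+100i$ and $z_2=2$. What holds is monotonicity along horizontal lines. That covers the comparisons among the $\chi_n$, since they all share the imaginary part $\Im\alpha_0/\beta_0$, and it is trivial for real $\alpha_0$. The paper's appeal to (\ref{2026042001}) glosses over the same point. Second, for $0<\lambda<1$ no Riemann--Lebesgue argument can give $\LandauO(|z|^{-2})$, because $\Phi(z,\lambda)\sim\Gamma(\lambda+1)z^{-\lambda-1}$. Only large $\lambda$ matters for the asymptotic statement, which is also all that the paper's citation of $U(\lambda+1,2,z)\sim z^{-\lambda-1}$ provides.
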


\begin{proof}
The function $f_\lambda : (0, \infty) \rightarrow [0, 1)$, defined by $f_\lambda(s) := (1-(1-\log(s))^{-1})^\lambda$, for $0 < s \leq 1$, and $f_\lambda(s) = 0$ else, is continuous, for any $\lambda > 0$. The associated Mellin transform has the strip of analyticity $\Re z > 0$, and a comparison with (\ref{2026061601}) shows that it satisfies the identity $\int_0^\infty s^{z-1} f_\lambda(s) ds = \Gamma(\lambda+1) U(\lambda+1, 2, z)$. Also, $0 < \varphi(t) \leq 1$, for $t \in \mathcal{P}$, by Assumption \ref{AssIngFctLT}, so that $f_\lambda(\varphi(t)) = \exp\curbr{-\lambda \phi(t)}$. Thus, the Mellin inversion formula, for any $t \in \mathcal{P}$ and $x_0>0$, implies that
\begin{align*}
e^{-\lambda \phi(t)} = \frac{1}{2\pi i} \intl_{x_0-i\infty}^{x_0+i\infty} \curbr{\varphi(t)}^{-z} \Gamma(\lambda+1)U(\lambda+1, 2, z) dz.
\end{align*}
Moreover, from (13.7.3) in \cite{olver2010nist}, we know that $U(\lambda+1, 2, z) \sim z^{-\lambda-1}$, as $\Im z \rightarrow \pm\infty$. Lastly, via partial integration, for fixed $\lambda, \Re z > 0$, we get
\begin{align*}
\Gamma(\lambda+1) U(\lambda+1, 2, z) = \intl_0^\infty e^{-zt} \frac{t^\lambda}{(1+t)^\lambda} dt = \frac{\lambda}{z} \intl_0^\infty e^{-zt} \frac{t^{\lambda-1}}{(1+t)^{\lambda+1}} dt.
\end{align*}
Thereof, by application of the triangle inequality and another subsequent integration by parts, it follows that
\begin{align*}
|U(\lambda+1, 2, z)| \leq \frac{\Re z}{|z|} U(\lambda+1, 2, \Re z).
\end{align*}
Altogether, in view of (\ref{2026042001}), we conclude that Assumption \ref{LemExpwrtAsympScaleA} is fulfilled, with $\Phi(z, \lambda) := \Gamma(\lambda+1)U(\lambda+1,2,z)$, so that Theorem \ref{TheoExpwrtAsympScale} is applicable.
\end{proof}

\section{Applications} \label{SecApp}

We finally discuss a few concrete examples to illustrate the applicability of our results. Each of the integrals that we consider here represents a bias between the so-called deconvolution function $\mathfrak{D}(\cdot, \lambda-1)$ (or its density $\mathfrak{d}(\cdot, \lambda-1)$; see \cite{Kaiser2025deconvolutionarbitrarydistributionfunctions}) and the probability distribution function $F_X$ (or density $f_X$) of a target random variable $X$, that is blurred by an additive random error $\varepsilon$. In \cite{Kaiser2025deconvolutionarbitrarydistributionfunctions}, by means of a representation as a Fourier-type integral, it was shown that $\mathfrak{D}(\cdot, \lambda-1) \rightarrow F_X$ and $\mathfrak{d}(\cdot, \lambda-1) \rightarrow f_X$, as $\lambda \rightarrow \infty$, under mild conditions. The subsequent examples quantify this rate of convergence. Roughly speaking, each bias integral consists of a kernel of the form $(1-h(t))^\lambda$, where $h : \R \rightarrow [0, 1]$ corresponds to the characteristic function of $\varepsilon$ (i.e., the Fourier-Stieltjes transform of the associated probability measure), and an amplitude function that is determined by the distribution of the target. We particularly focus on the evaluation of integrals that are outside the scope of Laplace's method.

\begin{ex} \label{Ex2026042101}
In our first example, we deal with an integral with a logarithmic asymptotic behaviour. Assume continuity of $f : (0, \infty) \rightarrow \C$, with $f(t) = \LandauO(t^{-\kappa})$, as $t \downarrow 0$, and $f(t) = f_0 t^{-\alpha}$, as $t \rightarrow \infty$, for $\kappa \geq 0$, $\alpha \in \N\setminus\curbr{1}$ and $f_0 \in \C\setminus\curbr{0}$. Also suppose the existence of $r_g > 0$ and a function $g(t)$ that is analytic in $D_{r_g}(0)$, with $g(0) = 0$ and $g(t) = f(\frac{1}{t})$, for $0 < t < r_g$. Finally, for fixed $\beta \geq 0$, $K \in \N_0$, with $\beta+K>0$, and $(\sigma_k)_{0 \leq k \leq K} \subset\R$, let
\begin{align*}
p(t) := t^\beta \suml_{k=0}^K \sigma_k t^k \hspace{1cm} (t >0)
\end{align*}
be the product of a power function and a non-trivial polynomial, such that $p(t) \geq 0$, for $t \geq 0$. In this event, for all sufficiently large $\lambda > 0$, we have absolute convergence of the integral
\begin{align*}
\Epsilon(\lambda) := \intl_0^\infty (1 - e^{-p(t)})^\lambda f(t) dt.
\end{align*}
If $\beta+K \leq 2$ and $\sigma_k > 0$, for all $0 \leq k \leq K$, then $\exp\curbr{-p(t)}$ represents the product of characteristic functions of certain stable distributions. Our aim is a full expansion for the above integral, as $\lambda \rightarrow\infty$. For large values of $\lambda$, it has a decreasing character, and the main contribution comes from the point at infinity. Hence, we first map this infinite to a finite critical point. Thereof, with $\phi(s) := -\log\curbr{1-\exp\curbr{-p(\frac{1}{s})}}$, we get
\begin{align*}
\Epsilon(\lambda) = \intl_0^\infty e^{-\lambda \phi(s)} s^{-2} f\rb{\frac{1}{s}} ds.
\end{align*}
In this, the phase function $\phi(s)$ vanishes exponentially fast, while the amplitude is algebraic, as $s\downarrow0$. The Laplacian approximation (Corollary \ref{CorExpPwrsLambd1}) is thus inapplicable. On the other hand,
\begin{align*}
\varphi(s) := \rrb{ p\rb{\frac{1}{s}} }^{-1} = s^{\beta+K} \rrb{\suml_{k=0}^K \sigma_k s^{K-k} }^{-1}
\end{align*}
exhibits algebraic behaviour at $s=0$. The last function is exactly the product of a power and an analytic function, with $\varphi(s) \sim \sigma_K^{-1} s^{\beta+K}$, as $s \downarrow 0$. Furthermore, by assumption, $a(s) := s^{-2}g(s)$ is analytic in $D_{r_g}(0)$, with $a(s) = s^{-2}f\rb{\frac{1}{s}}$, for $0 < s < r_g$, and $a(s) \sim f_0s^{\alpha-2}$, as $s \downarrow 0$. In summary, we conclude the applicability of Corollary \ref{CorAnLogPwrFct}, leading to the expansion
\begin{align} \label{ExNormalCauchyConv4}
\Epsilon(\lambda) \sim \suml_{n=0}^\infty \mathcal{M}_{\operatorname{B}} \rb{ \frac{\alpha-1+n}{\beta+K},\lambda-1 } \operatorname{c}(n) \hspace{1cm} (\lambda\rightarrow\infty),
\end{align}
where the coefficients, according to (\ref{ZeroAlgBound21b}), are given by
\begin{align*}
\operatorname{c}(n) := \frac{1}{\beta + K} \frac{1}{n! } \frac{d^n }{ds^n } \frac{ f(\frac{1}{s}) }{ s^\alpha } \rb{ \suml_{k=0}^K \sigma_k s^{K-k} }^\frac{\alpha-1+n}{\beta+K} \Bigg|_{s=0} \hspace{1cm} (n\in\N_0).
\end{align*}
Figure \ref{exp_phase_plots.fig} exemplarily shows the absolute deviation of various partial sums of the above expansion from the full integral $\Epsilon(\lambda)$, on a logarithmic scale. In the considered situation, an exponentially distributed target is blurred by an error variable that consists of a convolution of a Cauchy and a Gauss distribution.
\end{ex}

\begin{figure}
\centering
\resizebox*{\textwidth}{!}{\includegraphics{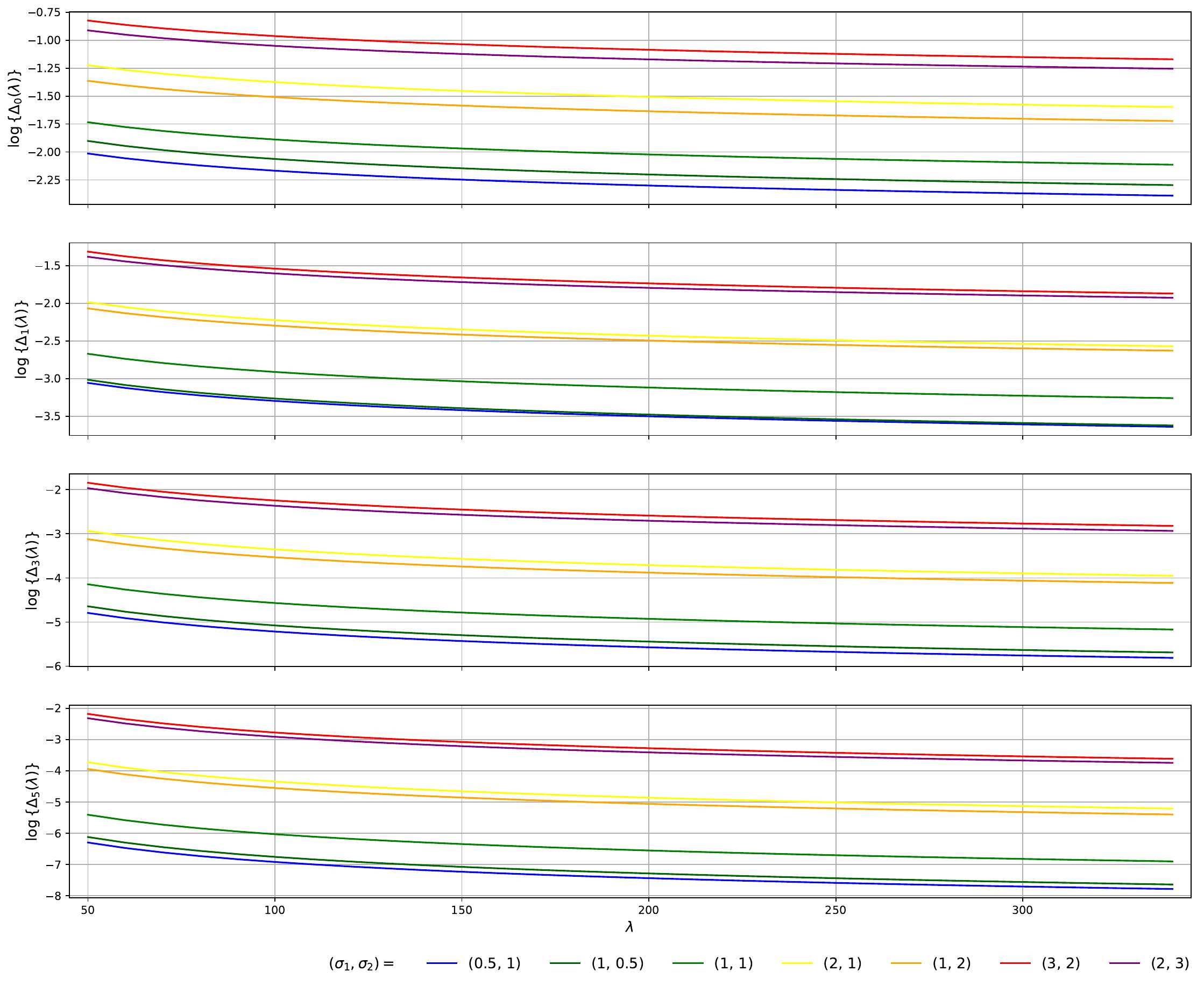}}
\vspace{-0.5cm}
\caption{Logarithmized absolute deviation $\log\curbr{ \Delta_N(\lambda)}$ of the integral $\Epsilon(\lambda)$ from the $N$-th partial sum of the asymptotic series (\ref{ExNormalCauchyConv4}), if $p(t) := \sigma_1t+\sigma_2t^2$ and $f(t) := \curbr{t(1-i t)}^{-1}$. In each row, $N \in \curbr{0, 1, 3, 5}$ is fixed and graphs correspond to distinct values of $\sigma_1, \sigma_2 > 0$. Observe the remarkable improvement in accuracy between the first and the second order approximation. The reason is that the amplitude $f(t)$ and hence also $\Epsilon(\lambda)$ have non-zero real and imaginary parts. In this, neither the dominating term of $\Re \Epsilon(\lambda)$ nor of $\Im \Epsilon(\lambda)$ is negligible. Therefore, the purely imaginary first order approximation $\Epsilon(\lambda) \sim \frac{i}{2} \sqrt{\sigma_2} \mathcal{M}_{\operatorname{B}} \rb{ \frac{1}{2},\lambda-1 }$ is highly inaccurate.} \label{exp_phase_plots.fig}
\end{figure}

Our second and last example is special, since a direct application of our results is not possible. The dominating term of the integral under consideration has a very unique structure, due to the interrelation between an oscillatory and an exponential factor in the kernel, which incurs an infinite sequence of critical points. A slightly similar setup was treated in \cite{Paris2020SineInts}.

\begin{ex}\label{CosExpExample}
Consider a situation, in which a Cauchy-type target random variable (with scale $\theta > 0$) is blurred by an additive error variable that is a convolution of a Cauchy-type (with scale $\sigma > 0$) and a discrete uniform distribution (on $\curbr{-1, 1}$). The bias between the deconvolution function and the distribution function of the target variable, for $\lambda > 0$, is then given by
\begin{align} \label{CauchyDiscrUnifEps1}
\Ypsilon_{\sigma, \theta}(\lambda) := \intl_0^\infty \rrb{ 1-e^{-\sigma t} \cos^2(t) }^\lambda e^{-\theta t} dt.
\end{align}
We want to study the dominating behaviour as $\lambda \rightarrow \infty$. Obviously, the kernel of $\Ypsilon_{\sigma, \theta}(\lambda)$ has an infinite sequence of local maxima at the zeros of the cosine and an additional maximum at infinity, where it exhibits exponential behaviour. By analy\-ti\-city of the ingredient functions in a neighborhood of $\pi(\frac{1}{2} + k)$, for any $k \in \N_0$, the asymptotic behaviour of the integral along each finite segment of the positive real axis can be approximated through the method of Laplace (Corollary \ref{CorExpPwrsLambd1}). Yet, this is no longer applicable when an arbitrary infinite segment of the positive real axis is considered, because the zeros of the kernel lie dense in any neighborhood of infinity, i.e., infinity is not an isolated critical point. At the same time, as we will see below, it is important to distinguish between the contribution from the finite zeros and the zero at infinity. In particular, the asymptotic behaviour of $\Ypsilon_{\sigma, \theta}(\lambda)$ need not coincide with the sum of the contributions from the points $\pi(\frac{1}{2} + \N_0)$. To begin, we represent the kernel in terms of the binomial integral (\ref{AsymptExpByAnalyticCont7}), because in that way the oscillatory can be separated from the exponential factor, so that the effect of each factor becomes obvious. Similar to Lemma \ref{LemAnGenFctAlgKer}, with $\arg\curbr{\cos^2(t)} = 0$, for $0 < t < \infty$, it is easy to verify that the generating function
\begin{align} \label{CauchyDiscrUnifEps2}
\mathfrak{M}_{\sigma, \theta}(z) := \intl_0^\infty \rrb{ \cos^2(t) }^{-z} e^{-(\theta-\sigma z) t} dt
\end{align}
converges absolutely for $\Re z < \min\rrb{\frac{1}{2}, \frac{\theta}{\sigma}}$ and represents a $z$-holomorphic function in this half plane. The boundary conditions are determined by the finite zeros and the exponential behaviour of the integrand at infinity. In accordance with these findings, for $0<x_0<\min\rrb{\frac{1}{2}, \frac{\theta}{\sigma}}$, with the aid of (\ref{AsymptExpByAnalyticCont7}), we obtain from (\ref{CauchyDiscrUnifEps1}) the Mellin-Barnes integral
\begin{align} \label{CauchyDiscrUnifEps4}
\Ypsilon_{\sigma, \theta}(\lambda) = \frac{1}{2 \pi i} \intl_{x_0-i\infty}^{x_0+i\infty} \frac{\Gamma(\lambda+1)\Gamma(z)}{\Gamma(\lambda+1+z)} \mathfrak{M}_{\sigma, \theta}(z) dz.
\end{align}
For fixed $\Re z < \min\rrb{\frac{1}{2}, \frac{\theta}{\sigma}}$, elementary manipulations of the integral definition of $\mathfrak{M}_{\sigma, \theta}(z)$ in (\ref{CauchyDiscrUnifEps2}) and a subsequent application of the geometric sum formula yield
\begin{align*}
\mathfrak{M}_{\sigma, \theta}(z) = \frac{ e^{(\theta-\sigma z) \frac{\pi}{2}} }{ 2\sinh \rrb{(\theta-\sigma z) \frac{\pi}{2} } } \intl_0^\pi \rrb{ \cos^2(t) }^{-z} e^{-(\theta-\sigma z) t} dt.
\end{align*}
Finally, defining
\begin{align} \label{CauchyDiscrUnifEps6}
\mathfrak{N}_{\sigma, \theta}(z) := \intl_0^\frac{\pi}{2} \rrb{ \sin(t) }^{-z} \cosh\rrb{ \rb{\theta-\frac{\sigma z}{2} } t} dt,
\end{align}
by separation of the integration path, we arrive at
\begin{align} \label{CauchyDiscrUnifEps5}
\mathfrak{M}_{\sigma, \theta}(z) = \frac{ \mathfrak{N}_{\sigma, \theta}(2z) }{ \sinh \rrb{(\theta-\sigma z) \frac{\pi}{2} } }.
\end{align}
In this, $\mathfrak{N}_{\sigma, \theta}(2z)$ is holomorphic in $\Re z < \frac{1}{2}$, by Lemma \ref{LemAnGenFctAlgKer}, whereas the reciprocal hyperbolic sine is meromorphic, with an infinite sequence of simple poles at $z_k := \frac{\theta-i2k}{\sigma}$, for $k\in\Z$. As $z \rightarrow z_k$, we have
\begin{align} \label{2026060201}
\frac{ 1 }{ \sinh \rrb{(\theta-\sigma z)\frac{\pi}{2} } } = \frac{ 2 (-1)^{k+1} }{\sigma\pi(z-z_k)} + \frac{\pi \sigma}{12} (-1)^k (z-z_k) + \bigO{ (z-z_k)^3 }.
\end{align}
Altogether, the whole representation (\ref{CauchyDiscrUnifEps5}) is valid in $\Re z < \frac{1}{2}$, where it represents an analytic function, unless $2\theta < \sigma$, in which circumstances it is meromorphic, with an infinite sequence of simple poles on the line $\Re z = \frac{ \theta }{ \sigma }$. In this last case, we already have an analytic continuation of the integral definition of $\mathfrak{M}_{\sigma, \theta}(z)$. Now, upon plugging (\ref{CauchyDiscrUnifEps5}) into (\ref{CauchyDiscrUnifEps4}), still for $0 < x_0 < \min\curbr{\frac{1}{2}, \frac{\theta}{\sigma}}$, we obtain
\begin{align} \label{2026060102}
\Ypsilon_{\sigma, \theta}(\lambda) = \frac{1}{2 \pi i} \intl_{x_0-i\infty}^{x_0+i\infty} \frac{\Gamma(\lambda+1)\Gamma(z)}{\Gamma(\lambda+1+z)} \frac{ \mathfrak{N}_{\sigma, \theta}(2z) }{ \sinh \rrb{(\theta-\sigma z) \frac{\pi}{2} } } dz.
\end{align}
If $2\theta < \sigma$, it is directly possible to extract the dominating term in the asymptotic expansion of $\Ypsilon_{\sigma, \theta}(\lambda)$, by collecting the residues of the simple poles at $(z_k)_{k \in \Z}$. To this end, notice that $\mathfrak{N}_{\sigma, \theta}(2z) = \LandauO(1)$ in $\Re z < \frac{1}{2}$ and that $\curbr{\sinh\curbr{(\theta-\sigma z)\frac{\pi}{2}}}^{-1} = \LandauO(1)$, as $\Im z \rightarrow \pm\infty$, except along the indicated sequence of simple poles. Hence, due to the beta function, the leading asymptotic behaviour of the integrand in (\ref{2026060102}) is $\LandauO(\abs{z}^{-\lambda-1})$, as $\Im z \rightarrow\pm\infty$, in $\Re z < \frac{1}{2}$. Consequently, similar to the proof of Theorem \ref{TheoExpwrtAsympScale}, it is admitted to displace the integration path to the right over the infinite sequence of poles at $( z_k )_{k \in \Z}$, to match a line $\frac{\theta}{\sigma} < x_1 < \frac{1}{2}$. Thereof, in view of (\ref{2026060201}) and by Lemma A.2 in \cite{Kaiser04102025}, we get
\begin{align*}
\Ypsilon_{\sigma, \theta}(\lambda) = \frac{2}{\sigma\pi} \suml_{k=-\infty}^\infty \frac{\Gamma(\lambda+1)\Gamma(z_k)}{\Gamma(\lambda+1+z_k)} \mathfrak{N}_{\sigma, \theta}(2z_k) (-1)^k + \LandauO(\lambda^{-x_1}) \hspace{1cm} (\lambda\rightarrow\infty).
\end{align*}
Observe that $z_{-k} = \overline{z_k}$, for all $k \in \Z$, and that $z_k$ lies in the range of validity of the integral definition of the beta function and of $\mathfrak{N}_{\sigma, \theta}(2z)$. Thus, for brevity, introducing
\begin{align*}
q_{\sigma, \theta}(\lambda) := 2 \suml_{k=1}^\infty \Re \frac{\Gamma\rb{\lambda+1+\frac{\theta}{\sigma}} \Gamma\rb{\frac{\theta}{\sigma} - \frac{ i 2k}{\sigma}} }{ \Gamma\rb{\lambda+1+\frac{\theta}{\sigma}-\frac{ i 2k}{\sigma}} } \mathfrak{N}_{\sigma, \theta}(2z_k) (-1)^k,
\end{align*}
we have $q_{\sigma, \theta}(\lambda) = \LandauO(1)$, as $\lambda \rightarrow \infty$, and still for $\sigma > 2\theta$ we conclude that
\begin{align} \label{2026050501}
\Ypsilon_{\sigma, \theta}(\lambda) \sim \frac{2}{\sigma\pi} \frac{\Gamma\rb{\lambda+1} }{ \Gamma\rb{\lambda+1+\frac{\theta}{\sigma}} } \rrb{ \Gamma\rb{\frac{\theta}{\sigma}} \mathfrak{N}_{\sigma, \theta}\rb{-\frac{2\theta}{\sigma}} + q_{\sigma, \theta}(\lambda) } \hspace{0.5cm} (\lambda\rightarrow\infty).
\end{align}
To determine the dominating term in the case $\sigma \leq 2\theta$ or subsequent terms in the above asymptotic expansion, we require an analytic continuation of $\mathfrak{N}_{\sigma, \theta}(z)$. For this, however, since the associated amplitude function depends on the additional variable $z$, we can not directly refer to Theorem \ref{TheoAnContAnIng}. Fortunately, we can nevertheless proceed in the fashion of $\S$\ref{AnContTeixExp}. The key is to first observe that $\curbr{\cos(t)}^{-1} \cosh\rrb{\rb{\theta - \frac{\sigma z}{2}}t}$, at $t=t_0$, can be expanded in powers of $\sin(t)$, similar to Lemma \ref{LemZeroAlgBound15}, essentially because no singularities are affected by the choice of $z$ and because the sine is already conformal at $t=0$. As a consequence, it exists $r_s > 0$, for which $\sin(t)$ is one-to-one in $D_{r_s}(0)$. Additionally let $r_c>0$ be the greatest radius, such that $\cos(t) \neq 0$ for all $t \in D_{r_c}(0)$. Then, analogous to Lemma \ref{LemZeroAlgBound15}, for fixed $0<r_0<\min\curbr{r_s, r_c}$ and $(t, N, z) \in D_{r_0}(t_0) \times \N_0 \times \C$, one can show that
\begin{align} \label{2026060101}
\frac{\cosh\rrb{\rb{\theta - \frac{\sigma z}{2}}t}}{\cos(t)} = \suml_{n=0}^{N-1} \curbr{\sin(t)}^n \operatorname{c}(n, z) + \operatorname{C}(t, N, z),
\end{align}
where the coefficients are given by
\begin{align*}
\operatorname{c}(n, z) := \frac{1}{2\pi i} \ointl_{\Gamma_{r_0}(0)} \frac{\cosh\rrb{\rb{\theta - \frac{\sigma z}{2}}w}}{ \rrb{\sin(w)}^{n+1} } dw \hspace{1cm} (0 \leq n \leq N-1),
\end{align*}
while the remainder integral is of the form
\begin{align*}
\operatorname{C}(t, N, z) := \curbr{\sin(t)}^N \frac{1}{2\pi i} \ointl_{\Gamma_{r_0}(0)} \frac{\cosh\rrb{\rb{\theta - \frac{\sigma z}{2}}w}}{ \rrb{\sin(w)}^N } \frac{ dw }{ \sin(w) - \sin(t)}.
\end{align*}
Due to the entireness of the hyperbolic cosine, we moreover compute
\begin{align*}
\operatorname{c}(n, z) = \suml_{k=0}^{ \lfloor \frac{n+1}{2} \rfloor } \frac{ \rb{\theta - \frac{\sigma z}{2}}^{2k} }{ (2k)! } \frac{d^{n-2k}}{dw^{n-2k}} \rrb{ \frac{w}{\sin(w)} }^{n+1} \Bigg|_{w=0}.
\end{align*}
The last representation especially shows that $\operatorname{c}(n, z) = \LandauO\curbr{ |z|^{n+1} }$, as $|z| \rightarrow \infty$. Ana\-lo\-gously, one verifies that $\operatorname{C}(t, N, z) = \LandauO\curbr{ |z|^{N+1} }$, as $|z| \rightarrow \infty$. Thus, with the aid of the expansion (\ref{2026060101}), as in Theorem \ref{TheoAnContAnIng}, one can verify that $\mathfrak{N}_{\sigma, \theta}(2z)$ admits a meromorphic continuation to the half plane $\Re z < \frac{N+1}{2}$, with a simple pole at $z = \frac{n+1}{2}$ and associated residue equal to $- \frac{1}{2} \operatorname{c}(n, 2z)$, for each $0 \leq n \leq N-1$. The main difference to the result from the theorem, due to the dependence of the amplitude on the argument $z$, is that $\mathfrak{N}_{\sigma, \theta}(2z) = \LandauO\curbr{ |z|^{N+1} }$, as $\Im z \rightarrow \pm\infty$. Nevertheless, in view of the asymptotic behaviour of the beta function, we conclude that a complete asymptotic expansion for $\Ypsilon_{\sigma, \theta}(\lambda)$ also can be obtained from a rightward displacement of the integration path of the Mellin-Barnes integral (\ref{2026060102}), across an arbitrary but finite number of poles of the integrand. According to our previous observations, these lie at $z = \frac{n+1}{2}$, for $0 \leq n \leq N-1$, and at $(z_k)_{k \in \Z}$ and are all simple, unless it exists $n_0 \in \N$ with $2\theta = (n_0+1)\sigma$, in which case the pole at $z = z_0 = \frac{n_0+1}{2}$ is of order two. This second order pole specifies the dominating behaviour if $\sigma = 2\theta$, in which circumstances one can verify that $\Ypsilon_{\sigma, \theta}(\lambda) \sim \mbox{const} \times \log(\lambda) \lambda^{-\frac{1}{2}}$, as $\lambda \rightarrow \infty$. Finally, if $\sigma < 2\theta$, one easily shows that $\Ypsilon_{\sigma, \theta}(\lambda) \sim \mbox{const} \times \lambda^{-\frac{1}{2}}$, as $\lambda \rightarrow \infty$. This last case is the only situation, where the leading asymptotic behaviour of $\Ypsilon_{\sigma, \theta}(\lambda)$ is completely determined by the zeros of the cosine in the initial representation (\ref{CauchyDiscrUnifEps1}). However, the exponential factors then play an additional role when it comes to higher order terms. Altogether, for a reliable approximation of an integral with an infinite number of critical points, the previous example has shown that it is not recommended to confine to a local study and to eventually sum up the single contributions. Figure \ref{cos_exp_phase_plots.fig} illustrates the asymptotic character of $\Ypsilon_{\sigma, \theta}(\lambda)$ for selected parametrizations.
\end{ex}

\begin{figure}
\centering
\resizebox*{\textwidth}{!}{\includegraphics{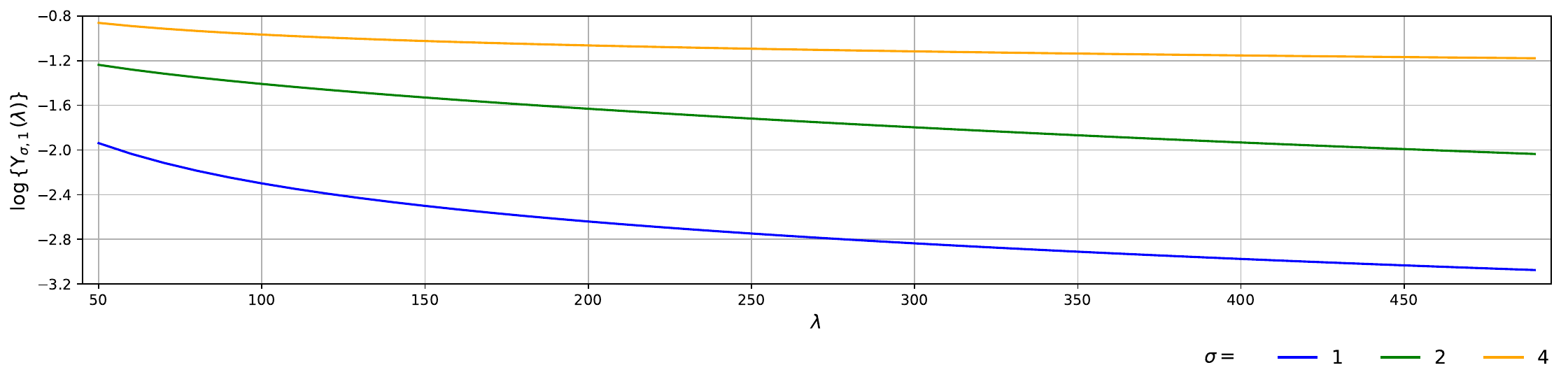}}
\vspace{-0.5cm}
\caption{Logarithmized graph for the integral $\Ypsilon_{\sigma, 1}(\lambda)$ from Example \ref{CosExpExample}, for various values of $\sigma$. In view of the obtained approximations, $\Ypsilon_{\sigma, 1}(\lambda) \sim \mbox{const} \times \lambda^{-\min\curbr{\frac{1}{2}, \frac{1}{\sigma}}}$, as $\lambda \rightarrow \infty$, except for $\sigma = 2$, where an additional logarithmically divergent factor appears.} \label{cos_exp_phase_plots.fig}
\end{figure}

\section{Conclusion} \label{SecConc}

In this text, we extended Laplace's method in an intuitive fashion with the aid of Mellin transforms, thereby facilitating the asymptotic evaluation of the generalized Laplace transform $\mathcal{L}_{\phi, \mathcal{P}}\curbr{a}(\lambda)$ for a wider class of ingredient functions. The technique essentially relies on an appropriate representation for the kernel of $\mathcal{L}_{\phi, \mathcal{P}}\curbr{a}(\lambda)$, through which the initial Laplacian integral becomes a Mellin-Barnes integral, with asymptotic scaling function $\Phi(z, \lambda)$ and generating function $\mathcal{M}_{\varphi, \mathcal{P}}\curbr{a}(z)$. The path of this Mellin-Barnes integral is a vertical line, whose intersection with the real axis indicates the asymptotic order of $\mathcal{L}_{\phi, \mathcal{P}}\curbr{a}(\lambda)$. If $\varphi(t)$ and $a(t)$ have power series expansions in a neighborhood of the critical points, we have shown that $\mathcal{M}_{\varphi, \mathcal{P}}\curbr{a}(z)$ can be continued to a meromorphic function in the complex plane, with an infinite sequence of equidistant simple poles with increasing real parts. An asymptotic expansion for $\mathcal{L}_{\phi, \mathcal{P}}\curbr{a}(\lambda)$ in terms of the given asymptotic scale $\Phi(z, \lambda)$ then can be obtained by collecting the residues of these poles. The asymptotic scales that we considered here have algebraic, logarithmic and exponential character. In a concrete example, the character of the applicable asymptotic scales is specifically pre-determined by our analyticity assumption (Assumption \ref{AssIngGenFct2}) and hence by the properties of $\phi(t)$. The reason is that, e.g., $\phi(t)$ and $-\log\phi(t)$ never exhibit the same local behaviour at a critical point. However, the applicant can choose between different types of scales with the same character, for instance, to obtain an inverse factorial instead of a power series expansion.
\\
\hspace*{1em}Finally, if an appropriate representation for the kernel is available, by reference to Theorem \ref{TheoExpwrtAsympScale}, it is easy to establish further expansions for $\mathcal{L}_{\phi, \mathcal{P}}\curbr{a}(\lambda)$. Moreover, the kernel actually need not be of exponential-type. For instance, consider the class of generalized Stieltjes transforms $\mathcal{S}_{\phi, \mathcal{P}}\curbr{a}(\lambda) := \int_\mathcal{P} \frac{ a(t) }{1 + \lambda\phi(t)} dt$, for suitable functions $a$ and $\phi>0$. The integral $\mathcal{S}_{\phi, \mathcal{P}}\curbr{a}(\lambda)$ is of Laplace-type, since the main contribution as $\lambda \rightarrow \infty$ comes from the infima of $\phi(t)$. Use of the inverse Mellin transform of $\frac{1}{1+t}$, for admissible $0 < x_0 < 1$, formally yields the Mellin-Barnes representation
\begin{align*}
\mathcal{S}_{\phi, \mathcal{P}}\curbr{a}(\lambda) = \frac{1}{2\pi i} \intl_{x_0-i\infty}^{x_0+i\infty} \lambda^{-z} \Gamma(z)\Gamma(1-z) \mathcal{M}_{\phi, \mathcal{P}}\curbr{a}(z) dz.
\end{align*}
If the analytic continuation of $\mathcal{M}_{\phi, \mathcal{P}}\curbr{a}(z)$ is available from Theorem \ref{TheoAnContAnIng}, an asymptotic expansion for $\mathcal{S}_{\phi, \mathcal{P}}\curbr{a}(\lambda)$ directly can be obtained by displacement of the integration path. Notice that, unlike in case of an exponential kernel, additional poles show up in the half plane $\Re z > 0$, which are inherited from the integral representation for the kernel.

\section*{Declarations}

\bmhead{Funding} This research did not receive funding.

\backmatter

\begin{appendices}

\section{Analyticity of a class of generating functions}

We here summarize a few basic analyticity properties of generating functions of the form
\begin{align*}
\mathcal{M}(z) := \intl_0^1 \rrb{\varphi(t)}^{-z} a(t,  z) dt,
\end{align*}
in case that each ingredient function is at most of algebraic order.

\begin{lem} \label{LemAnGenFctAlgKer}
Let $\varphi : (0, 1] \rightarrow (0, \infty)$ and $a : (0, 1] \times \C \rightarrow \C$ be continuous. Suppose that $0 \leq \lim_{t \downarrow 0} \varphi(t) < \infty$, as well as the existence of $\beta > 0$ and $\alpha > -1$, for which $\sup_{t \in (0, 1]} t^\beta \curbr{ \varphi(t) }^{-1} = \LandauO( 1 )$ and $\sup_{(t, z) \in (0, 1] \times \C,~ x_1 < \Re z < x_2} t^{-\alpha} |a(t, z)| = \LandauO( 1 )$, for all $x_1 < x_2$. Also assume entireness of $z \mapsto a(t, z)$, for each $t \in (0, 1]$. In these circumstances, $\mathcal{M}(z)$ is holomorphic in its region of absolute convergence, i.e., in the half plane
\begin{align*}
\Re z < \frac{\alpha+1}{\beta},
\end{align*}
and the associated derivatives can be obtained by differentiation under the sign of integration. In particular, if $z_0 \in \C$ with $\Re z_0 < \beta^{-1}(\alpha+1)$, and $f$ is analytic in a punctured neighborhood of $z_0$, then
\begin{align*}
\Res{z=z_0}{ f(z)\mathcal{M}(z) } = \intl_0^1 \Res{z=z_0}{ \rrb{\varphi(t)}^{-z} f(z) } a(t, z) dt.
\end{align*}
\end{lem}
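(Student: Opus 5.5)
The plan is to reduce everything to a single domination bound on compact subsets of the half plane, and then apply the standard theorems on holomorphic parameter integrals. Fix $x_1 < x_2 < (\alpha+1)/\beta$ and consider the strip $S := \{x_1 < \Re z < x_2\}$.

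First I would bound the integrand uniformly on $S$. Write $\varphi(t)^{-z} = \exp\{-z\log\varphi(t)\}$, so that $|\varphi(t)^{-z}| = \varphi(t)^{-\Re z}$. Since $\lim_{t\downarrow0}\varphi(t)$ is finite and $\varphi$ is continuous and positive on $(0,1]$, there is a constant $M$ with $\varphi \le M$. Combined with $\varphi(t) \ge c\,t^\beta$, this gives
\[
\varphi(t)^{-\Re z} \le C\bigl(1 + t^{-\beta \max\{x_2,0\}}\bigr)
\]
on $S$. One uses the upper bound $M$ when $\Re z<0$ and the lower bound $c\,t^\beta$ when $\Re z\ge0$. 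With the hypothesis $|a(t,z)| \le C' t^{\alpha}$, the integrand on $S$ is dominated by
\[
g(t) := C''\bigl(t^{\alpha} + t^{\alpha - \beta\max\{x_2,0\}}\bigr).
\]
This is integrable on $(0,1]$ because $\alpha>-1$ and $\alpha-\beta x_2>-1$, the latter being equivalent to $x_2<(\alpha+1)/\beta$. Hence the integral converges absolutely on the half plane $\Re z<(\alpha+1)/\beta$. I will not try to prove that this half plane is exactly the region of absolute convergence; the statement only needs holomorphy there.

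Next I would derive holomorphy and differentiation under the integral sign. For each fixed $t$, the integrand $z\mapsto \varphi(t)^{-z}a(t,z)$ is entire. It is jointly continuous, since $a$ is continuous, and it is dominated on $S$ by $g$. Morera's theorem together with Fubini then gives holomorphy of $\mathcal{M}$ on $S$, and continuity of $\mathcal{M}$ follows from dominated convergence. For the derivatives, apply Cauchy's estimate on small discs around $z$ inside $S$: the $z$-derivatives of the integrand are dominated by a constant multiple of $g$, after slightly enlarging the strip. Dominated convergence on difference quotients, or equivalently Cauchy's integral formula combined with Fubini, then justifies differentiating under the integral sign.

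Finally, the residue identity. Pick a small circle $\gamma$ around $z_0$ that lies in a strip $S$ as above and in the punctured neighbourhood where $f$ is analytic. Then
\[
\operatorname{Res}_{z=z_0} f(z)\mathcal{M}(z) = \frac{1}{2\pi i}\oint_\gamma f(z)\int_0^1 \varphi(t)^{-z}a(t,z)\,dt\,dz .
\]
Since $|f|$ is bounded on $\gamma$ and the inner integrand is dominated by $g(t)$, Fubini lets us swap the two integrals. The inner contour integral is then the residue at $z_0$ of $z\mapsto \varphi(t)^{-z}f(z)a(t,z)$. This is exactly what the notation $\operatorname{Res}\{\varphi(t)^{-z}f(z)\}\,a(t,z)$ means, since $a$ depends on $z$ and is entire in $z$.

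The only real obstacle is the first step, the uniform dominating bound. One has to handle both signs of $\Re z$, using the upper bound on $\varphi$ for negative real parts and $\varphi\ge c\,t^\beta$ for positive ones; after that, the rest is routine.
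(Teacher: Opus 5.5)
Your proof is correct. It rests on the same core estimate as the paper's, but reaches holomorphy and the residue formula by a more streamlined route.

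Both arguments use the bound $\varphi(t)^{-\Re z}|a(t,z)|\lesssim t^{\alpha-\beta\Re z}$, coming from $\sup_t t^\beta\varphi(t)^{-1}<\infty$, together with boundedness of $\varphi$ for $\Re z\le 0$.

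The paper then truncates, setting $\mathcal{M}_n(z)=\int_{1/n}^1\cdots$. It shows $\mathcal{M}_n\to\mathcal{M}$ uniformly on compacts with an explicit rate $n^{\beta x_+-1-\alpha}$ and applies Weierstrass's theorem. It does this separately in $0<\Re z<(\alpha+1)/\beta$ and in $\Re z<0$, then glues across $\Re z=0$ by continuity and Morera. The residue formula is read off from the Laurent coefficients of $f\mathcal{M}$.

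Your single majorant $g$ on a strip straddling $\Re z=0$ lets the Morera--Fubini/dominated-convergence theorem for parameter integrals handle everything at once. The imaginary axis needs no special treatment. Your route also has two side benefits:
\begin{itemize}
\item Your Cauchy-estimate step gives $\mathcal{M}^{(k)}(z)=\int_0^1\partial_z^k[\varphi(t)^{-z}a(t,z)]\,dt$. This correctly accounts for the $z$-dependence of $a$; the paper's displayed formula for $\mathcal{M}_n^{(k)}$ differentiates only $\varphi^{-z}$.
\item Your contour-integral-plus-Fubini derivation of the residue identity works verbatim when $f$ has an essential singularity at $z_0$. The Laurent route then produces an infinite sum $\sum_k f_{-k-1}\mathcal{M}^{(k)}(z_0)/k!$, whose interchange with the $t$-integral needs extra justification.
\end{itemize}
In return, the truncation approach gives a quantitative convergence rate and uses only uniform limits of entire functions.

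You were also right not to claim that the half plane is exactly the region of absolute convergence. With only upper bounds on $|a|$ (e.g.\ $a\equiv 0$) that is false in general, and the paper likewise only proves convergence there.
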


Notice that the integral $\mathcal{M}(z)$ establishes an entire function, whenever $\beta > 0$ is arbitrary. The proof of the lemma uses various classical results from complex calculus.

\begin{proof}[Proof of Lemma \ref{LemAnGenFctAlgKer}]
In the sequel, for brevity, we write $B := \sup_{s \in (0, 1]} s^\beta \curbr{\varphi(s)}^{-1}$. Then, for fixed $0 \leq \Re z < \beta^{-1}(\alpha+1)$, we first of all get
\begin{align*}
\intl_0^1 \curbr{\varphi(t)}^{-\Re z} |a(t, z)| dt \leq B^{\Re z} \supl_{s \in (0, 1] } s^{-\alpha} |a(s, z)| \intl_0^1 t^{\alpha-\beta \Re z} dt < \infty.
\end{align*}
A similar bound can be obtained for fixed $\Re z \leq 0$, since $\sup_{s \in (0, 1]} \varphi(s) = \LandauO(1)$ and $\alpha>-1$. Thus, so far, we verified absolute convergence of $\mathcal{M}(z)$ for $\Re z < \beta^{-1}(\alpha+1)$. Regarding analyticity, in view of Theorem 5.6.1 in \cite{wegert2012visual} and our assumptions, we infer entireness of
\begin{align*}
\mathcal{M}_n(z) := \intl_\frac{1}{n}^1 \rrb{\varphi(t)}^{-z} a(t, z) dt \hspace{1cm} (n\in\N).
\end{align*}
Additionally consider a compact subset $E$ from the strip $0 < \Re z < \beta^{-1}(\alpha+1)$, with $x_- := \min\rrb{\Re z : z\in E}$ and $x_+ := \max\rrb{\Re z : z\in E}$. Then, $0 < x_- \leq x_+ < \beta^{-1}(\alpha+1)$, and we deduce, for any $z\in E$, that
\begin{align*}
\abs{\mathcal{M}_n(z) - \mathcal{M}(z)} \leq B^{\max\rrb{x_-, x_+}} \supl_{ \substack{ (s,z) \in (0, 1] \times \C \\ x_- < \Re z < x_+ } } s^{-\alpha} |a(s, z)| \frac{ n^{ \beta x_+ - 1-\alpha } }{ 1+\alpha-\beta x_+ }.
\end{align*}
It shows that $\mathcal{M}_n(z) \rightarrow \mathcal{M}(z)$, as $n \rightarrow \infty$, uniformly with respect to $z \in E$. Therefore, through Theorem 5.1.3 in \cite{wegert2012visual}, we eventually conclude analyticity of $\mathcal{M}(z)$ in $0 < \Re z < \beta^{-1}(\alpha+1)$. At the same time, for $k \in \N$, the $k$-th derivative of $\mathcal{M}_n(z)$ can be obtained from $k$-times differentiation under the integral sign, viz $\mathcal{M}_n^{(k)}(z) = \int_\frac{1}{n}^1 \curbr{\varphi(t)}^{-z} (-\log\curbr{ \varphi(t) })^k a(t, z) dt$, and $\mathcal{M}_n^{(k)}(z)$ converges to the $k$-th derivative of $\mathcal{M}(z)$, uniformly in any compact subset of the indicated strip, again according to Theorems 5.1.3 and 5.6.1 in \cite{wegert2012visual}. Consequently, also $\mathcal{M}(z)$ is differentiable under the sign of integration. By analogous arguments, one verifies these properties in $\Re z < 0$. Finally, $\mathcal{M}(z)$ and its derivatives are obviously continuous in the whole half plane $\Re z < \beta^{-1}(\alpha+1)$ and specifically along the line $\Re z = 0$, so that Morera's theorem (see Corollary 4.2.23 in \cite{wegert2012visual}) altogether implies analyticity there. The proof is completed, as the residue is exactly the Laurent coefficient of $f(z)\mathcal{M}(z)$ associated with the term $(z-z_0)^{-1}$, where the coefficients in the series expansion of $\mathcal{M}(z)$ result from differentiation.
\end{proof}

\section{Some estimates for a generalized Laplace transform with a large argument}

This paragraph provides various asymptotic estimates for the Laplace transform $\mathcal{L}_{ \phi, \mathcal{P} }\curbr{a}(\lambda)$ under broad assumptions on the ingredient functions. We will give a sufficient condition for exponential decay, as $\lambda \rightarrow \infty$, and it will turn out that $\mathcal{L}_{ \phi, \mathcal{P} }\curbr{a}(\lambda)$ can not vanish faster than $e^{-\lambda\sigma}$, where $\sigma > 0$ is determined through $\phi$.

\begin{lem} \label{Lem20260417}
Consider an arbitrary $\mathcal{P} \subseteq \R$ and $\phi, a : \mathcal{P} \rightarrow \C$, such that it exists $\varepsilon>0$, with $|\arg\curbr{\phi(t)}| < \frac{\pi}{2}-\varepsilon$, for $t \in \mathcal{P}$, and $\int_\mathcal{P} |a(t)| dt < \infty$. If there exists $x_0 > 0$, with $\int_\mathcal{P} |\phi(t)|^{-x_0} |a(t)| dt < \infty$, then
\begin{align*}
\mathcal{L}_{ \phi, \mathcal{P} }\curbr{a}(\lambda) = \LandauO\rb{ \lambda^{-x_0} } \hspace{1cm} (\lambda\rightarrow \infty).
\end{align*}
If $x_0>0$ is arbitrary, the decay of $\mathcal{L}_{ \phi, \mathcal{P} }\curbr{a}(\lambda)$ is exponential with respect to $\lambda$.
\end{lem}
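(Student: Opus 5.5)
The plan is to bound the kernel pointwise and then integrate. Write $\phi(t) = |\phi(t)| e^{i\theta(t)}$ with $|\theta(t)| < \frac{\pi}{2}-\varepsilon$. Then $\Re\phi(t) = |\phi(t)|\cos\theta(t) \geq |\phi(t)|\sin\varepsilon$, and consequently
\begin{align*}
\abs{ e^{-\lambda\phi(t)} } = e^{-\lambda \Re\phi(t)} \leq e^{-\lambda \sin(\varepsilon) |\phi(t)|} \hspace{1cm} (t\in\mathcal{P},~\lambda\geq 0).
\end{align*}
The first assertion then reduces to an elementary inequality. For fixed $x_0>0$, the function $u \mapsto u^{x_0} e^{-u}$ on $(0,\infty)$ attains its maximum $(x_0/e)^{x_0}$. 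Taking $u := \lambda\sin(\varepsilon)|\phi(t)|$ gives
\begin{align*}
e^{-\lambda \sin(\varepsilon) |\phi(t)|} \leq \rb{\frac{x_0}{e\sin\varepsilon}}^{x_0} \lambda^{-x_0} |\phi(t)|^{-x_0}.
\end{align*}
Multiplying by $|a(t)|$ and integrating over $\mathcal{P}$, we get $|\mathcal{L}_{\phi,\mathcal{P}}\curbr{a}(\lambda)| \leq C_{x_0}\lambda^{-x_0}\int_\mathcal{P}|\phi(t)|^{-x_0}|a(t)|dt$, which is $\LandauO(\lambda^{-x_0})$ by hypothesis. The integrability of $|a|$ guarantees that the Laplace transform exists for every $\lambda \geq 0$. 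If $\phi$ vanishes on a set of positive measure, then $|\phi|^{-x_0}|a|$ can only be integrable if $a=0$ almost everywhere there, so such sets contribute nothing and cause no difficulty.

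For the second assertion I read ``decay is exponential'' as $\mathcal{L}_{\phi,\mathcal{P}}\curbr{a}(\lambda) = \LandauO(e^{-c\lambda})$ for some $c>0$. This does not follow just by letting $x_0\to\infty$ in the polynomial bound, because the constants $C_{x_0}\int|\phi|^{-x_0}|a|$ grow with $x_0$. I would therefore argue directly. First, integrability of $|\phi|^{-x_0}|a|$ for all $x_0$ forces $|a|$ to be small wherever $|\phi|$ is small. Fix some $\delta>0$ and split $\mathcal{P}$ into $\mathcal{P}_\delta^- := \curbr{|\phi|<\delta}$ and $\mathcal{P}_\delta^+ := \curbr{|\phi|\geq\delta}$.

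On $\mathcal{P}_\delta^+$ the kernel bound gives $|\mathcal{L}_{\phi,\mathcal{P}_\delta^+}\curbr{a}(\lambda)| \leq e^{-\lambda\delta\sin\varepsilon}\int|a|$, which is exponentially small.

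On $\mathcal{P}_\delta^-$ I would minimize over $x_0$. For every $x>0$ we have $|\phi|^{-x}\geq \delta^{-x}$ there, so $\int_{\mathcal{P}_\delta^-}|a| \leq \delta^{x}\int|\phi|^{-x}|a|$. Choosing $\delta$ small makes this mass small, but it does not produce a $\lambda$-exponential rate unless the moments $I(x):=\int|\phi|^{-x}|a|$ grow at most geometrically in $x$. This is the main obstacle. I would interpret the condition ``$x_0>0$ arbitrary'' as saying that $|\phi|^{-x_0}|a|$ is integrable, with the intended meaning that $|\phi|$ is bounded below on the support of $a$, i.e. $\inf_{\operatorname{supp} a}|\phi| =: \sigma_0 > 0$. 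Indeed, if $|\phi|<\sigma_0$ on a set where $a\neq0$ with positive $|a|$-mass, a Chebyshev-type argument gives $I(x)\geq \sigma_0^{-x}\cdot(\text{mass})$ for every $x$. Under this interpretation one may take $\delta=\sigma_0$, so $\mathcal{P}_\delta^-$ carries no $|a|$-mass, and the bound on $\mathcal{P}_\delta^+$ yields $\LandauO(e^{-\lambda\sigma_0\sin\varepsilon})$. If the author intends the weaker statement ``faster than any power of $\lambda$'', then it follows immediately from the first part applied with every $x_0$, and I would present that as the primary conclusion, treating the strictly exponential version as holding under the support/infimum interpretation.
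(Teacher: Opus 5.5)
Your proof of the first assertion is correct, but it follows a different route from the paper. The paper inserts the Cahen--Mellin representation $e^{-\lambda\phi(t)} = \frac{1}{2\pi i}\int_{x_0-i\infty}^{x_0+i\infty}\lambda^{-z}\Gamma(z)\curbr{\phi(t)}^{-z}dz$. It uses the sector condition to get $|\curbr{\phi(t)}^{-x_0-iy}| \leq |\phi(t)|^{-x_0}e^{|y|(\frac{\pi}{2}-\varepsilon)}$, and observes that this growth is beaten by the exponential decay of $\Gamma(x_0+iy)$. Absolute convergence of the double integral then gives $|\mathcal{L}_{\phi,\mathcal{P}}\curbr{a}(\lambda)| \leq \frac{\lambda^{-x_0}}{2\pi}\int_{-\infty}^{\infty}|\Gamma(x_0+iy)|e^{|y|(\frac{\pi}{2}-\varepsilon)}dy\int_{\mathcal{P}}|\phi(t)|^{-x_0}|a(t)|dt$.

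You spend the margin $\varepsilon$ differently, on $\Re\phi \geq \sin(\varepsilon)|\phi|$, and then bound the kernel pointwise via $\sup_{u>0}u^{x_0}e^{-u} = (x_0/e)^{x_0}$. Your version is shorter, needs no Mellin inversion or interchange of integrals, and gives the explicit constant $(x_0/(e\sin\varepsilon))^{x_0}$. The paper's version is chosen to stay inside its Mellin--Barnes framework. As the remark after the lemma notes, that template carries over to other kernel representations, such as binomial ones, once the associated scale function has a suitable bound on vertical lines.

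For the second assertion, the paper intends exactly the weak reading you settle on. The remark following the lemma describes the behaviour as $e^{-u(\lambda)}$ with $\log\lambda = o(u(\lambda))$, i.e. decay faster than every power of $\lambda$. The paper's proof offers nothing beyond the $\LandauO(\lambda^{-x_0})$ bound for every $x_0$, so your primary conclusion is the right one.

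You should, however, drop the detour through $\inf_{\operatorname{supp} a}|\phi|$. Your ``Chebyshev-type'' lower bound $I(x)\geq\sigma_0^{-x}$ times a positive mass is perfectly compatible with $I(x)<\infty$ for all $x$. So it does not show that the hypothesis forces $|\phi|$ to be bounded away from zero on the support of $a$. In fact the strict $\LandauO(e^{-c\lambda})$ version is false in general. Take $\mathcal{P}=(0,1)$, $\phi(t)=t$ and $a(t)=e^{-1/t}$: every hypothesis holds for every $x_0>0$, yet $\mathcal{L}_{\phi,\mathcal{P}}\curbr{a}(\lambda)\sim\sqrt{\pi}\lambda^{-3/4}e^{-2\sqrt{\lambda}}$.
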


Formally, in the situation of Lemma \ref{Lem20260417}, $\mathcal{L}_{ \phi, \mathcal{P} }\curbr{a}(\lambda) \sim \mbox{const} \times e^{-u(\lambda)}$, where $u(\lambda) \rightarrow \infty$ and $\log(\lambda) = o( u(\lambda) )$, as $\lambda \rightarrow \infty$. Analogous statements can be obtained by means of other integral representations for the kernel $\exp\curbr{-\lambda\phi}$, provided an appropriate bound is available for the associated asymptotic scale. For instance, binomial integrals are considered in Lemma A.2 in \cite{Kaiser04102025}.

\begin{proof}[Proof of Lemma \ref{Lem20260417}]
By assumption, the integral definition of $\mathcal{L}_{\phi, \mathcal{P} }\curbr{a}(\lambda)$ converges absolutely, for all $\lambda \geq 0$. Let $y \in \R$. Then, $| \curbr{\phi(t)}^{-x_0-iy} | = \LandauO( \curbr{\phi(t)}^{-x_0} \exp\curbr{|y|(\frac{\pi}{2}-\varepsilon)} )$, for each $t \in \mathcal{P}$, so that the integral definition of $\mathcal{M}_{\varphi, \mathcal{P} }\curbr{a}(x_0+iy)$ converges absolutely and is $\LandauO( \exp\curbr{|y|(\frac{\pi}{2}-\varepsilon)} )$, as $y \rightarrow \pm \infty$. Besides, it is well-known that $\Gamma(x_0+iy) = \LandauO(\exp\curbr{-|y|\frac{\pi}{2}})$, as $y \rightarrow \pm\infty$ (see, e.g., (5.11.3) in \cite{olver2010nist}). Thus,
\begin{align*}
\mathcal{L}_{\phi, \mathcal{P} }\curbr{a}(\lambda) = \frac{1}{2\pi i} \intl_{x_0-i\infty}^{x_0+i\infty} \lambda^{-z} \Gamma(z) \intl_\mathcal{P} \rrb{\phi(t)}^{-z} a(t) dt dz \hspace{1cm} (\lambda > 0),
\end{align*}
and the double integral converges absolutely, making the asserted result obvious.
\end{proof}

Apparently, the decay of $\mathcal{L}_{ \phi, \mathcal{P} }\curbr{a}(\lambda)$, as $\lambda \rightarrow \infty$, can be arbitrarily slow. Conversely, it may be surprising that it can not happen arbitrarily fast, at least under some widely applicable conditions. For brevity, we write
\begin{align*}
\sigma_- := \infl_{s \in \mathcal{P}} \phi(s), \hspace{1cm} \sigma_+ := \supl_{s \in \mathcal{P}} \phi(s).
\end{align*}

\begin{lem} \label{Lem20260527}
Consider an arbitrary $\mathcal{P} \subseteq \R$ and $\phi : \mathcal{P} \rightarrow [0, \infty)$, with $0 \leq \sigma_- \leq \sigma_+ < \infty$. For $k \in \curbr{1,2}$, let $a_k : \mathcal{P} \rightarrow [0, \infty)$ satisfy $0 < \int_\mathcal{P} a_k(t) dt < \infty$ and define $a(t) := a_1(t)-a_2(t)$, for $t \in \mathcal{P}$. Then, it exist $k_a, K_a > 0$, with
\begin{align*}
k_a e^{-\lambda \sigma_+ } \leq | \mathcal{L}_{\phi, \mathcal{P} }\curbr{a}(\lambda) | \leq K_a e^{-\lambda \sigma_- } \hspace{1cm} (\lambda \rightarrow \infty),
\end{align*}
provided one of the following cases applies:
\begin{enumerate}
\item We can find distinct $j, k \in \curbr{1,2}$, with $\mathcal{L}_{\phi, \mathcal{P} }\curbr{a_j}(\lambda) = o( \mathcal{L}_{\phi, \mathcal{P} }\curbr{a_k}(\lambda) )$, as $\lambda \rightarrow \infty$.
\item $\mathcal{L}_{\phi, \mathcal{P} }\curbr{a_1}(\lambda) \sim c \mathcal{L}_{\phi, \mathcal{P} }\curbr{a_2}(\lambda)$, as $\lambda \rightarrow \infty$, for a fixed $c>0$, with $c \neq 1$.
\end{enumerate}
\end{lem}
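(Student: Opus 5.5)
The plan is to reduce both bounds to elementary two-sided estimates for the nonnegative integrals $L_k(\lambda) := \mathcal{L}_{\phi, \mathcal{P}}\curbr{a_k}(\lambda)$, $k \in \curbr{1,2}$. Since $a_k \geq 0$ and $\sigma_- \leq \phi(t) \leq \sigma_+$ on $\mathcal{P}$, monotonicity of the exponential immediately gives
\begin{align*}
e^{-\lambda\sigma_+} \intl_\mathcal{P} a_k(t) dt \leq L_k(\lambda) \leq e^{-\lambda\sigma_-} \intl_\mathcal{P} a_k(t) dt \hspace{1cm} (\lambda \geq 0).
\end{align*}
Both integrals $A_k := \int_\mathcal{P} a_k(t) dt$ are positive and finite by assumption. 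By linearity, $\mathcal{L}_{\phi, \mathcal{P}}\curbr{a}(\lambda) = L_1(\lambda) - L_2(\lambda)$.

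The upper bound then follows from the triangle inequality alone, with $K_a := A_1 + A_2$. No case distinction is needed, and the bound holds for all $\lambda \geq 0$.

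For the lower bound I would treat the two cases separately, in each case writing the difference as a multiple of a single $L_k$. In case 1, with $L_j = o(L_k)$, we have $|L_1 - L_2| = L_k\,|1 - L_j/L_k| \geq \tfrac12 L_k$ for all sufficiently large $\lambda$, and hence $|L_1 - L_2| \geq \tfrac12 A_k e^{-\lambda\sigma_+}$. In case 2, $L_1/L_2 \to c \neq 1$, so $|L_1 - L_2| = L_2\,|L_1/L_2 - 1| \geq \tfrac12 |c-1| L_2 \geq \tfrac12|c-1| A_2 e^{-\lambda\sigma_+}$ eventually. Taking $k_a$ to be the relevant constant completes the argument. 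Note that the ratios are well defined, because each $L_k(\lambda) \geq A_k e^{-\lambda\sigma_+} > 0$.

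The only delicate point is ensuring that no cancellation occurs in $L_1 - L_2$. Cases 1 and 2 are precisely the hypotheses that exclude it, namely the situation $L_1 \sim L_2$. Apart from this, the proof is routine, and finiteness of $\sigma_+$ is essential for the lower exponential bound.
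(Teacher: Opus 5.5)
Your proof is correct and follows the paper's argument essentially step for step. The upper bound comes from the triangle inequality, and the lower bound comes from writing $|L_1-L_2|$ as one $L$ times $|1-\text{ratio}|$, with the ratio bounded away from $1$, and then using $L_k(\lambda)\geq e^{-\lambda\sigma_+}\int_{\mathcal{P}} a_k(t)\,dt>0$. Your explicit choice in case 1 to factor out the dominant integral, so that the ratio tends to $0$, is slightly cleaner than the paper's wording that the bracketed term ``tends to a non-zero limit'', which with the roles assigned the other way would be a limit of $+\infty$.
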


The bound fails to hold if $\mathcal{L}_{\phi, \mathcal{P} }\curbr{a_1}(\lambda) \sim \mathcal{L}_{\phi, \mathcal{P} }\curbr{a_2}(\lambda)$, as $\lambda \rightarrow \infty$, because it is then indeed possible to have $\mathcal{L}_{\phi, \mathcal{P} }\curbr{a}(\lambda) = 0$, for all $\lambda \geq 0$. For instance, observe that $\int_0^\pi e^{-\lambda \sin(t)} \cos(t) dt \equiv 0$.

\begin{proof}[Proof of Lemma \ref{Lem20260527}]
Under the present assumptions, the integrals that define $\mathcal{L}_{\phi, \mathcal{P} }\curbr{a_j}(\lambda)$, for $j \in \curbr{1,2}$, and $\mathcal{L}_{\phi, \mathcal{P} }\curbr{a}(\lambda)$ converge absolutely, for all $\lambda \geq 0$. The validity of the upper bound is thus obvious from the triangle inequality. For the proof of the lower bound, for $j \in \curbr{1,2}$, we observe that
\begin{align*}
\mathcal{L}_{\phi, \mathcal{P} }\curbr{a_j}(\lambda) = \intl_\mathcal{P} e^{-\lambda\phi(t)} a_j(t) dt \geq e^{-\lambda \sigma_+ } \intl_\mathcal{P} a_j(t) dt > 0.
\end{align*}
Moreover, for distinct $j, k \in \curbr{1,2}$, we can write
\begin{align*}
\mathcal{L}_{\phi, \mathcal{P} }\curbr{a}(\lambda) = | \mathcal{L}_{\phi, \mathcal{P} }\curbr{a_1}(\lambda) - \mathcal{L}_{\phi, \mathcal{P} }\curbr{a_2}(\lambda) | = \mathcal{L}_{\phi, \mathcal{P} }\curbr{a_j}(\lambda) \abs{ 1 - \frac{ \mathcal{L}_{\phi, \mathcal{P} }\curbr{a_k}(\lambda) }{ \mathcal{L}_{\phi, \mathcal{P} }\curbr{a_j}(\lambda) } }.
\end{align*}
In each of the two assumed cases, the term in absolute value on the right hand side tends to a non-zero limit. The asserted lower inequality is thus a direct consequence of the above estimate for $\mathcal{L}_{\phi, \mathcal{P} }\curbr{a_j}(\lambda)$.
\end{proof}

\end{appendices}

\bibliography{References}

\end{document}